  \newtheorem{thm}{Theorem}[section]
  \newtheorem{lem}[thm]{Lemma}
  \newtheorem{prop}[thm]{Proposition}
  \theoremstyle{definition}
  \newtheorem{exm}[thm]{Example}
  \newtheorem{rmk}[thm]{Remark}
 \newcommand\ra{\rightarrow}
\newcommand{\lex}{\,\overrightarrow{\times}\,}
\newcommand{\RDP}{\mbox{\rm RDP}}
\newcommand{\RIP}{\mbox{\rm RIP}}
 \numberwithin{equation}{section}
\begin{document}
\title{When Lexicographic Product of Two po-Groups has the Riesz Decomposition Property}
\author{ Anatolij Dvure\v{c}enskij$^{^{1,2}}$, Omid Zahiri$^{^{3}}$ \\
{\small\em $^1$Mathematical Institute,  Slovak Academy of Sciences,}\\
{\small\em\v Stef\'anikova 49, SK-814 73 Bratislava, Slovakia} \\
{\small\em $^2$Depart. Algebra  Geom.,  Palack\'{y} Univer., 17. listopadu 12,}\\
{\small\em CZ-771 46 Olomouc, Czech Republic} \\
{\small\em  $^{3}$University of Applied Science and Technology, Enghelab Av., Tehran, Iran}\\
{\small\tt  dvurecen@mat.savba.sk\quad   zahiri@protonmail.com} }
\date{}
\maketitle

\begin{abstract}
We study conditions when a certain type of the Riesz Decomposition Property (RDP for short) holds in the lexicographic product of two po-groups. Defining two important properties of po-groups, we extend known situations showing that the lexicographic product satisfies RDP or even RDP$_1$, a stronger type of RDP. We recall that a very strong type of RDP, RDP$_2$, entails that the group is lattice ordered.
RDP's of the lexicographic products are important for the study of lexicographic pseudo effect algebras, or perfect types of pseudo MV-algebras and pseudo effect algebras,  where infinitesimal elements play an important role both for algebras as well as for the first order logic of valid but not provable formulas.

\end{abstract}

{\small {\it AMS Mathematics Subject Classification (2010)}: 06D35, 03G12 }

{\small {\it Keywords:}  po-group, lexicographic product, unital po-group, antilattice po-group,  Riesz Decomposition Property,  pseudo effect algebra}

{\small {\it Acknowledgement:} This work was supported by the grant VEGA No. 2/0069/16 SAV, and GA\v{C}R 15-15286S. }

\section{ Introduction }

In the last decades we observe that there is a growing interest to the study of some algebraic structures using lattice ordered groups or po-groups both for Abelian and non-Abelian ones. A prototypical situation is due to Mundici, see \cite{Mun, CDM}, when any MV-algebra is represented as an interval in a unital Abelian $\ell$-group. This result was extended in \cite{Dvu1} where there was proved that pseudo MV-algebras, a non-commutative generalization of MV-algebras, see \cite{GeIo, Rac}, can be represented by intervals in  unital $\ell$-groups not necessarily Abelian.

For mathematical foundations of quantum mechanics, Foulis and Bennett introduced in \cite{FoBe} effect algebras which are partial algebras with a partially defined operation $+$, where $a+b$ means disjunction of two mutually excluded events $a$ and $b$. These effect algebras are in many cases also intervals in Abelian po-groups (= partially ordered groups). A sufficient condition for such a po-group representation is the Riesz Decomposition property, RDP, of the effect algebra and of the po-group, as it follows from \cite{Rav}. RDP means roughly speaking a possibility to perform a joint refinement of any two decompositions of the same element, and po-groups with RDP are intensively studied in literature, see e.g. \cite{Fuc, Goo}. Recently effect algebras have been extended to non-commutative algebras, called pseudo effect algebras in \cite{DvVe1,DvVe2}. Also if such a pseudo effect algebra satisfies a stronger form of RDP, namely RDP$_1$, then the pseudo effect algebra is an interval in a po-group with RDP$_1$ not necessarily Abelian, see \cite{DvVe1,DvVe2}. If we define yet a more stronger type of RDP, RDP$_2$, then the corresponding pseudo effect algebra is even a pseudo MV-algebra.

A perfect MV-algebra is an MV-algebra where each element is either an infinitesimal or a co-infinitesimal. Di Nola and Lettieri \cite{DiLe1,DiLe2} showed that such MV-algebras can be represented as an interval in the lexicographic product $\mathbb Z \lex G$ with strong unit $(1,0)$ for some Abelian $\ell$-group $G$. Any perfect effect algebra with RDP was represented also as an interval in the lexicographic product $\mathbb Z \lex G$, where $G$ is an Abelian po-group with RDP, \cite{177}.

We note that perfect MV-algebras have no parallels in the realm of Boolean algebras because perfect MV-algebras are not semisimple. The logic of perfect MV-algebras has an analogue in the  Lindenbaum algebra of the first order \L ukasiewicz logic which is not semisimple, because the valid but unprovable formulas are precisely the formulas that correspond to co-infinitesimal elements of the Lindenbaum algebra, see e.g. \cite{DiGr}.

A more general type of MV-algebras which are intervals in the lexicographic products $A \lex G$, where $A$ is a linearly ordered group and $G$ is an $\ell$-group, were described in \cite{DFL}. Lexicographic types of pseudo MV-algebras were studied in \cite{283}, and lexicographic types of effect algebras were presented in \cite{277} as an interval in $A\lex G$, where $A$ is a linearly ordered group and $G$ is a po-group with RDP. As we see, for lexicographic pseudo MV-algebras or pseudo effect algebras, the lexicographic product of two po-groups with RDP play a crucial role, see also \cite{262, Dvu2015}, where it was shown that if $A$ is an antilattice po-group with RDP and $G$ a directed po-group with RDP, then $A\lex G$ has RDP.

Therefore, the study of perfect MV-algebras, perfect pseudo MV-algebras or perfect pseudo effect algebras is tightly connected with an important phenomenon of the first order \L ukasiewicz logic on one side and on the lexicographic product of two po-groups with some kind od RDP on the second side. This is for us a good excuse to study lexicographic product of po-groups.

Thus our main goal of the present paper is to extend situations when the lexicographic product $A \lex G$ satisfies RDP.

The paper is organized as follows. The second section is an introduction to theory of po-groups. Here we define some kinds of RDP's, we show their basic properties as well as a correct example that RDP $\nRightarrow$ RDP$_1$, which besides is of the form of the lexicographic product. The third section describes situations when $A\lex G$ satisfies RDP, if $A$ is not necessarily an antilattice po-group with RDP. In the fourth section we define the com-directness property of a directed po-group $G$ which is stronger than directness and for Abelian po-groups they are equivalent. We show that then $A\lex G$ will have RDP for each $A$ and $G$ with RDP. The fifth  section is a continuation of the research. We define Non-Comparability Directness Property and we show its importance for the lexicographic product. In Conclusion we summarize our results and indicate possible applications for lexicographic pseudo effect algebras and some open questions are presented.

\section{Riesz Decomposition Properties of po-Groups}

We remind that a {\it po-group} is an additively written group $(G;+,0)$ endowed with a partial order $\le$ such that $g\le h$ implies $a+g+b\le a+h+b$ for all $a,b,h,g \in G$. The {\it lexicographic product} of two po-groups $(G_1;+,0)$ and $(G_2;+,0)$ is the direct product $G_1\times G_2$ endowed with the lexicographic ordering $\le$ such that $(g_1,h_1) \le (g_2,h_2)$ iff $g_1<g_2$ or $g_1=g_2$ and $h_1\le h_2$ for $(g_1,h_1), (g_2,h_2)\in G_1\times G_2$. If the order $\le$ implies that $G$ is a lattice, we say that $G$ is a {\it lattice ordered} po-group, or simply an $\ell$-group.

We denote by $G^+:=\{g \in G \mid g\ge 0\}$ and $G^-=\{g \in G \mid g \le 0\}$. An element $u \in G^+$ is said to be a {\it strong unit} (or order unit) if, given $g \in G$, there is an integer $n\ge 0$ such that $g \le nu$. A {\it unital po-group} is a pair $(G,u)$, where $G$ is a po-group and $u$ is a fixed strong unit of $G$. If $(H,u)$ is a unital po-group, then $(H\lex G,(u,0))$ is a unital po-group.

For more information about po-groups we recommend for example the following books \cite{Dar,Fuc1, Gla}.

The {\it center} of a group $G$ is the set $\mbox{Z}(G)=\{x\in G\mid x+y=y+x$ for all $y\in G\}$.

A po-group $G$ is {\it directed} if, given $g_1,g_2 \in G$, there is $h \in G$ such that $g_1,g_2 \le h$. This is equivalent to the property: Given $g_1,g_2 \in G$, there is $h \in G$ such that $g_1,g_2 \ge h$; or equivalently $G^+-G^+=G$. 

A poset $(P;\le)$ is said to be an {\it antilattice} if only comparable elements $a,b\in P$ have a joint  or  meet in $P$. A directed po-group $G$ is an antilattice iff $a\wedge b =0$ implies $a=0$ or $b=0$. For example, antilattices are (i) every linearly ordered group, (ii) $\mathcal B(H)$, the group of Hermitian operators of a Hilbert space $H$, \cite[Thm 58.4]{LuZa}, (iii) $G=\mathbb R^2$ with the positive cone of all $(x,y)$
such that either $x = y = 0$ or $x > 0$ and $y > 0$; in addition, $G$ is an antilattice with RDP, but $G$ is not a lattice.

In the literature, see e.g. \cite{Fuc, Goo, DvVe1}, there is a whole variety of the Riesz Decomposition Properties.

We say that a po-group $(G;+,0)$ satisfies

\begin{enumerate}
\item[(i)]
the {\it Riesz Interpolation Property} (RIP for short) if, for $a_1,a_2, b_1,b_2\in G,$  $a_1,a_2 \le b_1,b_2$  implies there exists an element $c\in G$ such that $a_1,a_2 \le c \le b_1,b_2;$

\item[(ii)]
\RDP$_0$  if, for $a,b,c \in G^+,$ $a \le b+c$, there exist $b_1,c_1 \in G^+,$ such that $b_1\le b,$ $c_1 \le c$ and $a = b_1 +c_1;$

\item[(iii)]
\RDP\  if, for all $a_1,a_2,b_1,b_2 \in G^+$ such that $a_1 + a_2 = b_1+b_2,$ there are four elements $c_{11},c_{12},c_{21},c_{22}\in G^+$ such that $a_1 = c_{11}+c_{12},$ $a_2= c_{21}+c_{22},$ $b_1= c_{11} + c_{21}$ and $b_2= c_{12}+c_{22};$

\item[(iv)]
\RDP$_1$  if, for all $a_1,a_2,b_1,b_2 \in G^+$ such that $a_1 + a_2 = b_1+b_2,$ there are four elements $c_{11},c_{12},c_{21},c_{22}\in G^+$ such that $a_1 = c_{11}+c_{12},$ $a_2= c_{21}+c_{22},$ $b_1= c_{11} + c_{21}$ and $b_2= c_{12}+c_{22}$, and $0\le x\le c_{12}$ and $0\le y \le c_{21}$ imply  $x+y=y+x;$

\item[(v)]
\RDP$_2$  if, for all $a_1,a_2,b_1,b_2 \in G^+$ such that $a_1 + a_2 = b_1+b_2,$ there are four elements $c_{11},c_{12},c_{21},c_{22}\in G^+$ such that $a_1 = c_{11}+c_{12},$ $a_2= c_{21}+c_{22},$ $b_1= c_{11} + c_{21}$ and $b_2= c_{12}+c_{22}$, and $c_{12}\wedge c_{21}=0.$

\end{enumerate}

If, for $a,b \in G^+,$ we have for all $0\le x \le a$ and $0\le y\le b,$ $x+y=y+x,$ we denote this property by $a\, \mbox{\rm \bf com}\, b.$

The RDP will be denoted by the following table:

$$
\begin{matrix}
a_1  &\vline & c_{11} & c_{12}\\
a_{2} &\vline & c_{21} & c_{22}\\
  \hline     &\vline      &b_{1} & b_{2}
\end{matrix}\ \ .
$$

For Abelian po-groups, RDP, RDP$_1,$ RDP$_0$ and RIP are equivalent.

By \cite[Prop 4.2]{DvVe1}, for directed po-groups we have
$$
\RDP_2 \quad \Rightarrow \RDP_1 \quad \Rightarrow \RDP \quad \Rightarrow \RDP_0 \quad \Leftrightarrow \quad  \RIP, \eqno(2.1)
$$
but the converse implications do not hold, in general. More precisely, in \cite[Prop 4.2(ii)]{DvVe1}, there was proved (i) a directed po-group $G$ satisfies \RDP$_2$ iff $G$ is an $\ell$-group, and in general, (ii) RDP$_1 \not\Rightarrow \RDP_2$, (iii) $\RDP_0\not\Rightarrow \RDP$.

\begin{rmk}\label{rm:RDP}
In \cite[Prop 4.2(ii)]{DvVe1}, there was found an example of a po-group with RDP but RDP$_1$ fails in it.  Unfortunately, as we now show, \cite[Ex 3.5]{DvVe1} has a gap because, the po-group from that example is in fact Abelian, so it satisfies both RDP as well as RDP$_1$.
\end{rmk}

\begin{proof}
Let $G$ be an additive group generated by the countably many elements $g_0, g_1, \ldots$, let $v: (G;+,0)\to (\mathbb R;+,0)$ be the homomorphism determined by the conditions $v(g_i ) = (1/2)^i$, $i =0,1,\ldots$, and let $G$ fulfil the condition that every $a \in G$ such that
$v(a) = 0$ commutes with each element $b \in G$. Define a partial order in $G$ by setting
$G^+:= \{x \in  G\mid x = 0 \mbox{ or } v(x) > 0\}$. This means that we have for $a, b \in G$ $a\le b$ iff $a=b$ or $v(a)<v(b)$. In \cite[Ex 3.5]{DvVe1}, there was proved that $G$ satisfies RDP.

In what follows, we show that every $g_i$ commutes with each $g_j$ for $i,j=0,1,\ldots$. Indeed, let $i<j$. For the element $g_i -2^{j-i}g_j$, we have $v(g_i -2^{j-i}g_j)=0$, so that it commutes with every element of $G$, in particular with $g_j$. Then $g_j +(g_i -2^{j-i}g_j)=(g_i -2^{j-i}g_j)+g_j= g_i+g_j - 2^{j-i}g_j$ so that $g_j+g_i=g_j+g_i$. Since the set $\{g_0,g_1,\ldots\}$ generates $G$, $G$ is Abelian. Whence, it satisfies RDP$_1$, which contradicts the statement in \cite[Ex 3.5]{DvVe1}.
\end{proof}

The following example is a simple one showing how we can create non-Abelian po-groups $H\lex G$ with RDP or RDP$_1$.

\begin{exm}\label{RDP1}
Let $(G;+,0)$ be a po-group with RDP (RDP$_1$) and $(A;+,0)$ be a non-Abelian group. Consider the trivial ordering on $A$. Then  $A\lex G$ is a non-Abelian po-group. It can be easily seen that $A\lex G$ has RDP (RDP$_1$). Indeed, for each positive element $(a_1,a_2),(b_1,b_2),(c_1,c_2),(d_1,d_2)\in A\lex G$,
such that $(a_1,a_2)+(b_1,b_2)=(c_1,c_2)+(d_1,d_2)$ we have $a_1=b_1=c_1=d_1=0$ and so we can find an RDP table (RDP$_1$ table) for it.

In particular, if $G$ is in addition with RDP, then $A\lex G$ is a non-Abelian po-group satisfying RDP$_1$.
\end{exm}

The following example shows that there exists a po-group such that RDP $\nRightarrow$ RDP$_1$. This example corrects the implication in the example of \cite[Prop 4.2(ii), Ex. 3.5]{DvVe1}, see Remark \ref{rm:RDP}.

First we present the following definition.

Let $\{A_i\mid i \in I\}$ be a system of po-groups. The direct product $\prod_{i\in I}A_i$ consists of elements of the form $(a^i)_{i\in I}$ or simply $(a^i)$, where $a^i \in A_i$ for all $i \in I$.
Besides the product ordering $\le$ on the direct product $\prod_{i \in I}A_i$, defined by $(a^i)_{i\in I}\le (b^i)_{i\in I}$ iff $a^i\le b^i$ for all $i \in I$, we define the {\it strict product ordering}, $\leqq$, defined by $(a^i)_{i\in I}\leqq (b^i)_{i\in I}$ iff either $a^i = b^i$ for all $i \in I$ or $a^i <b^i$ for all $i \in I$.

\begin{lem}\label{le:RxR}
Consider the po-group $\mathbb{R}\times \mathbb{R}$ with the strict product ordering $\leqq$. Let $(G;+,0)$ be a non-Abelian
directed po-group with \RDP. Then $(\mathbb R\times \mathbb R)\lex G$ satisfies \RDP. If $G$ satisfies \RDP$_1$, then $((\mathbb R\times \mathbb R)\lex G, ((1,1),0))$ is a unital po-group satisfying \RDP,  but \RDP$_1$ fails for it.
\end{lem}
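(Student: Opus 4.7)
The plan is to derive RDP for $(\mathbb{R}\times\mathbb{R})\lex G$ from the known antilattice result and then to exhibit an explicit witness showing RDP$_1$ fails. First, $(\mathbb{R}\times\mathbb{R},\leqq)$ is exactly the antilattice po-group with RDP listed as example (iii) in the introduction, with positive cone $\{(0,0)\}\cup\{(x,y):x>0,\ y>0\}$. Since $G$ is a directed po-group with RDP, the cited result of \cite{262, Dvu2015} immediately yields RDP for $(\mathbb{R}\times\mathbb{R})\lex G$. For the strong unit, given $((x,y),g)$, any integer $n>\max\{x,y\}$ satisfies $((x,y),g)<((n,n),0)=n((1,1),0)$.

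For the RDP$_1$ failure, I would first extract $p,q\in G^+$ with $p+q\neq q+p$ from the non-Abelianness of $G$; this uses $G=G^+-G^+$ and the fact that commutation with a positive element passes to its inverse, so pairwise commutativity of $G^+$ would force $G$ to be Abelian. Then I would take the witnesses
\[
a_1=((1,2),p),\ \ a_2=((2,1),q),\ \ b_1=((2,1),p),\ \ b_2=((1,2),q),
\]
for which $a_1+a_2=b_1+b_2=((3,3),p+q)$. Writing any RDP table as $c_{ij}=(\xi_{ij},\eta_{ij})$, the $\xi$-part is an RDP table in $\mathbb{R}\times\mathbb{R}$ with row sums $(1,2),(2,1)$ and column sums $(2,1),(1,2)$. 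A short parametric analysis (using parameters $s=\xi_{11}^{(1)}$ and $t=\xi_{11}^{(2)}$ in $[0,1]$), combined with the strict-positivity requirement that each $\xi_{ij}$ is either $(0,0)$ or has both coordinates strictly positive, rules out $s=1$ and $t=1$ (each would leave $\xi_{12}$ or $\xi_{21}$ with exactly one zero coordinate). Thus $\xi_{12}$ and $\xi_{21}$ must have both coordinates strictly positive.

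Since $\xi_{12}$ and $\xi_{21}$ are strictly positive in $\mathbb{R}\times\mathbb{R}$, the intervals $[0,c_{12}]$ and $[0,c_{21}]$ in $(\mathbb{R}\times\mathbb{R})\lex G$ each contain every element of the form $((0,0),g)$ with $g\in G^+$. The RDP$_1$ condition $c_{12}\,\com\,c_{21}$ then forces pairwise commutativity of $G^+$, contradicting non-Abelianness of $G$, so RDP$_1$ fails. The main technical obstacle is precisely the parametric analysis showing that $c_{12}$ and $c_{21}$ cannot have zero $\mathbb{R}\times\mathbb{R}$-component no matter how the RDP table is chosen, so that the lex-interval below them always reaches into the $G$-fibre above the origin and the non-commutativity of $G^+$ is unavoidably invoked.
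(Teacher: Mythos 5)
Your proof is correct, but both halves take a genuinely different route from the paper's. For \RDP\ you simply invoke the antilattice theorem (Theorem \ref{th:3.3}) after observing that $(\mathbb R\times\mathbb R,\leqq)$ is precisely the antilattice po-group with \RDP\ listed as example (iii) in Section 2; the paper instead rebuilds the \RDP\ table by hand, splitting the equation into one in the linearly ordered piece $\mathbb R\lex G$ and one in $\mathbb R$, and then adjusting entries to be strictly positive. Your shortcut is cleaner and loses nothing, since the paper itself remarks in Section 3 that this $A$ is an Abelian antilattice with \RDP. For the failure of \RDP$_1$, the paper works with an arbitrary non-commuting pair $x,y\in G$ (not necessarily positive), uses the witnesses $((1,4),x)+((3,7),y)=((2,3),a)+((2,8),b)$, forces $\pi_1(c_{12}),\pi_1(c_{21})$ to be strictly positive via non-comparability, and then tests $((s,t),x)$ and $((s,t),y)$ with $(s,t)$ chosen strictly below both projections. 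You instead first extract a non-commuting pair $p,q\in G^+$ --- a legitimate step, since if $G^+$ were pairwise commuting then the centralizer of each positive element would contain $G^+$ and hence the subgroup it generates, which is all of $G$ by directedness, forcing $G$ Abelian --- and then use the symmetric witnesses $((1,2),p),((2,1),q)$ versus the swapped pair, testing $((0,0),p)$ and $((0,0),q)$, which lie in $[0,c_{12}]$ and $[0,c_{21}]$ as soon as $\xi_{12},\xi_{21}$ are strictly positive. Your parametric check that $\xi_{12},\xi_{21}$ cannot vanish is sound (e.g.\ $\xi_{12}=(0,0)$ would force $\xi_{21}=(1,-1)$, which is not in the positive cone). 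The trade-off: your version needs the small extra lemma producing non-commuting positive elements but then tests the simplest possible pair sitting over the origin of $\mathbb R\times\mathbb R$; the paper avoids that lemma at the cost of a slightly more delicate choice of test elements. Both arguments are complete.
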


\begin{proof}
Let $((a_1,a_2),x), ((b_1,b_2),y), ((c_1,c_2),z), ((d_1,d_2),u)\ge 0$ in $(\mathbb R\times \mathbb R)\lex G$ be such that $((a_1,a_2),x)+ ((b_1,b_2),y)= ((c_1,c_2),z)+ ((d_1,d_2),u)$. Then either $a_1,a_2>0$ or $a_1,a_2=0$, and similarly for the elements $(b_1,b_2), (c_1,c_2), (d_1,d_2)$.

Inasmuch as $\mathbb R$ is a linearly ordered group, from \cite[Thm 3.1]{Dvu2015}, we conclude that, for $(a_1,x)+(b_1,y)=(c_1,z)+(d_1,u)$, there is an RDP decomposition

\[
\begin{matrix}
(a_1,x) &\vline & (n'_{11},c_{11}) & (n'_{12},c_{12})\\
(b_1,y) &\vline & (n'_{21},c_{21}) & (n'_{22},c_{22})\\
  \hline     &\vline      &(c_i,z) & (d_i,u)
\end{matrix}\
\]
and an RDP decomposition for $a_2+b_2=c_2+d_2$

\[
\begin{matrix}
a_2 &\vline & n''_{11} & n''_{12}\\
b_2 &\vline & n''_{21} & n''_{22}\\
  \hline     &\vline      &c_2 & d_2
\end{matrix}\ .
\]
Put

\[
\begin{matrix}
((a_1,a_2),x) &\vline & ((n'_{11},n''_{11}),c_{11}) & ((n'_{12},n''_{12}),c_{12})\\
((b_1,b_2),y) &\vline & ((n'_{21},n''_{21}),c_{21}) & ((n'_{22},n''_{22})_{i \in I},c_{22})\\
  \hline     &\vline      &((c_1,c_2),z) & ((d_1,d_2,u)
\end{matrix}\ . \ (A)
\]

If, one pair of the elements, e.g. $(a_1,a_2)=(0,0)$, then $(n'_{11},n''_{11})=(0,0)=(n'_{12},n''_{12})$, and for this case, (A) gives an RDP table.

Therefore, we can assume that $a_1,a_2,b_1,b_2,c_1,d_2>0$. We claim that
in the table (A) we can assume that all $n'_{ij}, n''_{ij}>0$ for $i,j=1,2$. Indeed, take e.g. the table

\[
\begin{matrix}
a_1 &\vline & n'_{11} & n'_{12}\\
b_i &\vline & n'_{21} & n'_{22}\\
  \hline     &\vline      &c_i & d_i
\end{matrix}\ \ .
\]

If, say $n'_{11}=0$, then $n'_{12}>0$ and $n'_{21}>0$. Since $n'_{12}$ and $n'_{21}$ are comparable, there is $n_0 \in A_i$ such that $0<n'_0 < n'_{12},n'_{21}$. Then in the table

\[
\begin{matrix}
a_1 &\vline & n'_{11}+n'_0 & n'_{12}-n'_0\\
b_1 &\vline & n'_{21}-n'_0 & n'_{22}+n'_0\\
  \hline     &\vline      &c_1 & d_1
\end{matrix}\ \
\]
all entries are strictly positive. In the same way we proceed with $n''_{ij}$.

Now we show that if $G$ satisfies RDP$_1$, then it does not hold in $(\mathbb R\times \mathbb R)\lex G$. Put $A = \mathbb R \times \mathbb R$ and let $\pi_1:A\lex G\ra A$ be the canonical projection map.
Since $G$ is non-Abelian, there are $x,y\in G$ such that $x+y\neq y+x$. Put $a,b\in G$ such that
$x+y=a+b$. Consider the equation
$((1,4),x)+((3,7),y)=((2,3),a)+((2,8),b)$. For each RDP table

$$
\begin{matrix}
((1,4),x)  &\vline & c_{11} & c_{12}\\
((3,7),y) &\vline & c_{21} & c_{22}\\
  \hline     &\vline      &((2,3),a) & ((2,8),b)
\end{matrix}\ \ ,
$$
we have $(0,0)\leqq \pi_1(c_{12})$ and  $(0,0)\leqq \pi_1(c_{21})$.
Since $(1,4)$ and $(2,3)$ are not comparable as well as $(3,7)$ and $(2,3)$, then $\pi_1(c_{12})$ and $\pi_1(c_{21})$ are non-zero elements of $A$
and so $\pi_1(c_{12})$ and $\pi_1(c_{21})$ are strictly positive. We can select a strictly positive element $(s,t)$ of $A$ such that
$(s,t)\leqq \pi_1(c_{12})$, $(s,t)\leqq \pi_1(c_{21})$ and $(s,t)\ne \pi_1(c_{12})$, $(s,t)\ne\pi_1(c_{21})$. Clearly,
$((s,t),x)\leq c_{12}$ and $((s,t),y)\leq c_{21}$ and $((s,t),y)+((s,t),x)\neq ((s,t),x)+((s,t),y)$.
\end{proof}

We note that for Abelian po-groups the equivalence RIP, RDP$_0$ and RDP was established in \cite[Prop 2.1]{Goo} without assumption that $G$ is directed. In \cite[Prop 4.2]{DvVe1}, the implications RDP $\Rightarrow$ RDP$_0$ $\Leftrightarrow$ RIP was proved for all po-groups under the assumption $G$ is directed. In what follows, we prove that RDP$_0$ is equivalent to RIP for any po-group $G$ not assuming $G$ is directed.

\begin{lem}\label{le:RIP}
In any po-group $G$, {\rm RDP}$_0$ is equivalent to \RIP.
\end{lem}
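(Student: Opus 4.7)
The plan is to prove the two directions independently, with both arguments carefully sidestepping any need to assume that $G$ is directed, and both working in the non-Abelian setting.

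For \RIP $\Rightarrow$ \RDP$_0$, I would take $a,b,c\in G^+$ with $a\le b+c$ and note that the four inequalities $0,\ a-c\le a,\ b$ all hold: $0\le a$ and $0\le b$ by positivity, $a-c\le a$ because $-c\le 0$, and $a-c\le b$ by right-subtracting $c$ from $a\le b+c$. Applying \RIP yields $b_1$ with $0,a-c\le b_1\le a,b$; then the natural candidate $c_1:=-b_1+a$ automatically satisfies $b_1+c_1=a$, lies in $G^+$ because $b_1\le a$, and fulfils $c_1\le c$ because $a-c\le b_1$ rearranges (right-adding $c$) to $a\le b_1+c$, equivalently $-b_1+a\le c$.

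For \RDP$_0$ $\Rightarrow$ \RIP, given $a_1,a_2\le b_1,b_2$, I would left-translate by $-a_1$ and set $u_i:=-a_1+b_i\in G^+$ and $k:=-a_1+a_2$; the task becomes producing $x\in G$ with $0,k\le x\le u_1,u_2$, after which $c:=a_1+x$ is the required interpolant. Note that $k$ need not be positive, which is the reason the proof of the directed case does not apply verbatim. The key observation is the inequality $u_1\le u_2+(-k+u_1)$, which reduces after right-subtracting $u_1$ to $0\le u_2-k$, i.e., $k\le u_2$, and therefore holds. Since $u_1,u_2,-k+u_1\in G^+$, \RDP$_0$ supplies a decomposition $u_1=\alpha+\beta$ with $0\le\alpha\le u_2$ and $0\le\beta\le -k+u_1$. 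Left-adding $k$ to the bound $\beta\le -k+u_1$ gives $k+\beta\le u_1$, and then right-subtracting $\beta$ yields $k\le u_1-\beta=\alpha$. Setting $x:=\alpha$, one verifies in one step each: $x\ge 0$, $x\ge k$ (just shown), $x\le u_1$ (because $\beta\ge 0$ forces $\alpha\le\alpha+\beta=u_1$), and $x\le u_2$ (direct from the decomposition), so $c:=a_1+\alpha$ interpolates $a_1,a_2$ below and $b_1,b_2$ above.

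The main obstacle is the combination of non-Abelianness with the absence of a common lower bound for $a_1,a_2$: in the directed case one could translate to $G^+$ and apply \RDP$_0$ classically, but here $k$ may be incomparable with $0$. The clever inequality $u_1\le u_2+(-k+u_1)$ is designed so that a single application of \RDP$_0$ produces a summand $\alpha$ whose upper bound $u_2$ and lower bound $k$ are controlled simultaneously by the two parts of the decomposition; the bookkeeping of left versus right translations (used to pass between $k+\beta\le u_1$ and $k\le u_1-\beta$) is the only place where the non-Abelian structure demands genuine care.
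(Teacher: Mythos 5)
Your proof is correct, and the nontrivial direction (RDP$_0$ $\Rightarrow$ RIP) is essentially the paper's own argument in mirror image: the paper bounds $b_2-a_1\le(b_2-a_2)+(b_1-a_1)$, applies RDP$_0$, and sets $c=c_2+a_1$, while you bound $-a_1+b_1\le(-a_1+b_2)+(-a_2+b_1)$, apply RDP$_0$, and set $c=a_1+\alpha$ --- the same translate-and-refine idea with left and right interchanged, and neither uses directedness. The only other difference is that you also write out RIP $\Rightarrow$ RDP$_0$ explicitly (correctly), where the paper simply cites Prop.\ 4.2 of Dvure\v{c}enskij--Vetterlein.
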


\begin{proof}
Let $G$ satisfy RDP$_0$ and let $a_1,a_2\le b_1,b_2$. Then $b_i-a_j \ge 0$ for $i,j=1,2$, and
$$
b_2-a_1 = (b_2-a_2)+(a_2-b_1)+(b_1 - a_1)\le (b_2 - a_2)+(b_1-a_1).
$$

Due to RDP$_0$, there are $c_1,c_2 \in  G^+$ such that $b_2-a_1 = c_1 +c_2$ and $c_1 \le b_2-a_2$, $c_2 \le b_1-a_1$. If we put $c = c_2 + a_1$, we have $b_2 = c_1 +c_2 +a_1 = c_1 + c$ which entails $c \le b_2$ and $a_1\le c$. On the other hand, $c-a_1 = c_2 \le b_1 - a_1$ which gives $c \le b_1$. Finally, $b_2 = c_1 + c \le (b_2- a_2)+ c$, so that $a_2 \le c$. Hence, $G$ satisfies RIP.

The converse implication follows from \cite[Prop 4.2]{DvVe1}.
\end{proof}

According to the latter result, the assumption of directness of a po-group $G$ is superfluous in (2.1).

\begin{prop}\label{pr:RDP5} Let $A$ be an antilattice po-group satisfying \RDP\ \textup(\RDP$_1$\textup). If for $a_1,a_2,b_1,b_2\in A^+$ we have $a_1 +a_2 = b_1 +b_2$, where $a_1\| b_1$, then there is an \RDP \ \textup(\RDP$_1$\textup) decomposition $(n_{ij})$, $i,j=1,2$, in $A$ such $n_{12},n_{21}>0$. In addition, in such a case, $n_{12},n_{21}>0$ and $n_{12}\|n_{21}$.

Moreover, if $(m_{ij})$, $i,j=1,2$, is an arbitrary {\rm RDP} decomposition for $a_1+a_2 =b_1+b_2$, then $m_{12}>0$ and $m_{21}>0$.
\end{prop}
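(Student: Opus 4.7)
My plan is to prove the ``Moreover'' clause first, since it makes the existence claim automatic once RDP (respectively RDP$_1$) is invoked, and then to deduce $n_{12}\|n_{21}$ by a single monotonicity step.

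For the ``Moreover'' clause, let $(m_{ij})_{i,j=1,2}$ be any RDP decomposition of $a_1+a_2=b_1+b_2$ and argue by contradiction. If $m_{12}=0$, then $a_1=m_{11}+m_{12}=m_{11}$, whence $b_1=m_{11}+m_{21}=a_1+m_{21}\ge a_1$ because $m_{21}\in A^+$; this contradicts $a_1\|b_1$. Symmetrically, $m_{21}=0$ gives $b_1=m_{11}$ and then $a_1=m_{11}+m_{12}=b_1+m_{12}\ge b_1$, again contradicting $a_1\|b_1$. Hence every RDP decomposition forces $m_{12},m_{21}>0$.

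The existence assertion now follows at once: the hypothesis that $A$ has RDP (respectively RDP$_1$) supplies some such decomposition $(n_{ij})$ for the given equation, and the paragraph above applied to $(n_{ij})$ yields $n_{12},n_{21}>0$. For the incomparability claim, suppose $n_{12}$ and $n_{21}$ were comparable; if $n_{12}\le n_{21}$, left-translation by $n_{11}$ gives $a_1=n_{11}+n_{12}\le n_{11}+n_{21}=b_1$, contradicting $a_1\|b_1$, and the opposite comparison yields $b_1\le a_1$ symmetrically. So $n_{12}\|n_{21}$. I expect no serious obstacle here: the whole argument is bookkeeping on the RDP table driven by the single fact $a_1\|b_1$, and the antilattice hypothesis on $A$, though the standing assumption of this part of the paper, does not appear to enter this particular proof.
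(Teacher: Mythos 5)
Your treatment of the ``Moreover'' clause is correct and is essentially the paper's own argument: $m_{12}=0$ forces $a_1=m_{11}$ and hence $b_1=a_1+m_{21}\ge a_1$, contradicting $a_1\|b_1$. Your derivation of $n_{12}\|n_{21}$ is in fact cleaner than the paper's: you apply translation invariance directly ($n_{12}\le n_{21}$ gives $a_1=n_{11}+n_{12}\le n_{11}+n_{21}=b_1$), whereas the paper first notes $n_{12}+n_{21}=n_{21}+n_{12}$ and builds a second RDP table with a zero entry to reach the same contradiction. Both are valid; yours avoids the detour.

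There is, however, a genuine gap in the existence part, and your closing remark that the antilattice hypothesis ``does not appear to enter'' is the symptom. As written, the first sentence of the proposition is redundant with the ``Moreover'' clause; what is actually being asserted (and what the paper proves) is the existence of a decomposition whose \emph{diagonal} entries $n_{11},n_{22}$ are also strictly positive, so that all four entries are nonzero. That is the nontrivial content, and it is exactly where the antilattice hypothesis is used. The paper argues: if there were no $n_0$ with $0<n_0<a_1,b_1$, then by RIP (Lemma~\ref{le:RIP}) every lower bound $c$ of $a_1,b_1$ would satisfy $c\le d$ for some $d$ with $0\le d\le a_1,b_1$, forcing $d=0$ and hence $a_1\wedge b_1=0$; but an antilattice admits no meet of the incomparable pair $a_1,b_1$, a contradiction. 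Having obtained $0<n_0<a_1,b_1$ and $0<m_0<a_2,b_2$, one applies RDP (RDP$_1$) to the shifted equation $(-n_0+a_1)+(a_2-m_0)=(-n_0+b_1)+(b_2-m_0)$ and then absorbs $n_0$ and $m_0$ back into the corner entries, producing a table with $n_{11},n_{22}>0$ (and then $n_{12},n_{21}>0$ follows from your ``Moreover'' argument). Your proof never produces such a table, so it establishes only the weaker, literally-stated version; you should add the construction above to recover the intended statement.
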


\begin{proof}
Since $a_1$ and $b_1$ are not comparable, so are $a_2$ and $b_2$, and hence $a_1,b_1>0$ as well as $a_2,b_2>0$. We assert that there is $n_0 \in A$ such that $0<n_0<a_1,b_1$. Suppose the converse. We show that then $a_1\wedge b_1$ exists in $A$ and $a_1\wedge b_1 =0$. Let $c\le a_1,b_1$. Since RDP entails the Riesz Interpolation Property, Lemma \ref{le:RIP}, there is $d\in A$ such that $c,0\le d\le a_1,b_1$. Due to the assumptions, $d<a_1,b_1$ and $d=0$, so that $c\le 0$, and $a_1\wedge b_1=0$ which is impossible because $a_1$ and $b_1$ are incomparable which proves the assertion.

Similarly, there is $m_0 \in A$ such that $0<m_0<a_2,b_2$. Hence, we have $(-n_0 +a_1)+(a_2-m_0)=(-n_0+b_1)+(b_2-m_0)$, where all the elements in brackets are strictly positive. Due to RDP (RDP$_1$) of $A$, we have an RDP (RDP$_1$) table for $(-n_0 +a_1)+(a_2-m_0)=(-n_0+b_1)+(b_2-m_0)$ as follows

\[
\begin{matrix}
-n_0+a_1 &\vline & n_{11} & n_{12}\\
a_2-m_0 &\vline & n_{21} & n_{22}\\
  \hline     &\vline      & -n_0+b_1 & b_2-m_0
\end{matrix}\ \ ,
\]
which gives
\[
\begin{matrix}
a_1 &\vline & n_0+n_{11} & n_{12}\\
a_2 &\vline & n_{21} & n_{22}+m_0\\
  \hline     &\vline      & b_1 & b_2
\end{matrix}\ \ ,
\]
where the elements in the upper left-side corner and in the lower right-side corner are strictly positive.

In other words, if $a_1\|b_1$, there is always an RDP  (RDP$_1$) table for $a_1+a_2=b_1+ b_2$
\[
\begin{matrix}
a_1 &\vline & n_{11} & n_{12}\\
a_2 &\vline & n_{21} & n_{22}\\
  \hline     &\vline      & b_1 & b_2
\end{matrix}\ \
\]
such that $n_{11},n_{22}>0$.

Assume that for our RDP table $n_{12}=0$. Then $n_{11}=a_1\le b_1$ which is impossible. In the similar way, we can prove that $n_{21}>0$.

Now let $n_{12}$ and $n_{21}$ be comparable. Due to the equality $n_{11}+n_{12}+n_{21}+n_{22} = n_{11}+n_{21}+n_{12}+n_{22}$, we have $n_{12}+n_{21}=n_{21}+n_{12}$.
If $n_{12}\le n_{21}$, then
\[
\begin{matrix}
a_1 &\vline & n_{11}+n_{12} & 0\\
a_2 &\vline & -n_{12}+n_{21} & n_{12}+n_{22}\\
  \hline     &\vline      & b_1 & b_2
\end{matrix}\ \
\]
and this is also an RDP table for $a_1+a_2=b_1+b_2$. But in such a case, $a_1\le b_1$ which is a contradiction. In a similar way we can prove that  $n_{21}\not\le n_{21}$. Hence, $n_{12}\| n_{21}$.

Similarly, if $(m_{ij})$ is a decomposition, then in the same way as for $(n_{ij})$ we have $m_{12}>0$ and $m_{21}>0$.
\end{proof}

\section{Riesz Decomposition Properties of the Lexicographic Product}

In this section, we concentrate to the Riesz Decomposition Properties of the lexicographic product of two po-groups. In particular, we introduce the Com-Directness Property for po-groups. We start with the following result which was proved in \cite[Thm 3.3]{Dvu2015}.

\begin{thm}\label{th:3.3}
Let $A$ be an antilattice po-group and $G$ be a directed po-group. Then $A\lex G$ satisfies \RDP\, if and only if both $A$ and $G$ satisfy \RDP.
\end{thm}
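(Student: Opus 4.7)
The plan is to show each direction of the equivalence in turn.

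For the forward direction, I would use a standard lift-and-project argument. To recover RDP in $A$, take any equation $a_1+a_2=b_1+b_2$ in $A^+$, lift it to $(a_1,0)+(a_2,0)=(b_1,0)+(b_2,0)$ in $(A\lex G)^+$, and apply the assumed RDP of $A\lex G$ to obtain a decomposition $((c_{ij},d_{ij}))$; since a lex-positive pair has non-negative first coordinate, the first coordinates form an RDP decomposition of $a_1+a_2=b_1+b_2$ in $A^+$. To recover RDP in $G$, lift $g_1+g_2=h_1+h_2$ in $G^+$ to $(0,g_i),(0,h_j)$; each row and column sum has zero first coordinate, so all first coordinates of the resulting decomposition are forced to vanish (two non-negative elements summing to zero must each be zero), and the remaining second coordinates give an RDP decomposition in $G^+$.

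For the backward direction, fix $(a_1,g_1)+(a_2,g_2)=(b_1,h_1)+(b_2,h_2)$ in the positive cone of $A\lex G$. Projecting onto the two coordinates yields $a_1+a_2=b_1+b_2$ in $A^+$ and $g_1+g_2=h_1+h_2$ in $G$ (not necessarily in $G^+$), together with the lex constraints $g_i\ge 0$ whenever $a_i=0$ and $h_j\ge 0$ whenever $b_j=0$. I build the RDP decomposition in two stages. First, produce an $A$-decomposition $(c_{ij})$ of the first-coordinate equation. In the main case where $a_1,a_2,b_1,b_2>0$: if $a_1\|b_1$ apply Proposition~\ref{pr:RDP5} to get all $c_{ij}>0$ strictly; if $a_1,b_1$ are comparable, say $a_1\le b_1$, use the standard corner decomposition $c_{11}=a_1$, $c_{12}=0$, $c_{21}=-a_1+b_1$, $c_{22}=b_2$, which keeps $c_{11},c_{22}>0$ and has zero entries only among $(c_{12},c_{21})$.

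Second, and this is the heart of the argument, fill in the $G$-entries by a shifting trick. Using directedness of $G$, pick $s,t\in G$ with $s\ge -g_1,-h_1$ and $t\ge -g_2,-h_2$, so that $s+g_1,\ s+h_1,\ g_2+t,\ h_2+t\in G^+$. The identity $g_1+g_2=h_1+h_2$ promotes to the positive-cone identity
\[
(s+g_1)+(g_2+t)=(s+h_1)+(h_2+t),
\]
to which RDP of $G$ applies and yields $e_{ij}\in G^+$. Define $d_{11}:=-s+e_{11}$, $d_{12}:=e_{12}$, $d_{21}:=e_{21}$, $d_{22}:=e_{22}-t$; a direct check confirms the four row/column sum equations for $(d_{ij})$, while $d_{12},d_{21}\ge 0$ automatically. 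The pair $(c_{ij},d_{ij})$ lies in the lex positive cone: where $c_{ij}>0$ this is automatic regardless of $d_{ij}$, and the zero entries of $(c_{ij})$ (at positions $(1,2)$ and/or $(2,1)$ in the main case) are precisely matched by the positivity of $d_{12},d_{21}$.

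The main obstacle I anticipate is the mixed degenerate sub-cases in which one of $a_1,b_1$ (resp.\ $a_2,b_2$) vanishes but the other does not, because then $c_{11}$ (resp.\ $c_{22}$) is forced to zero while the shift does not directly deliver $d_{11}\ge 0$ (resp.\ $d_{22}\ge 0$), as $h_1$ (resp.\ $h_2$) may be negative. These sub-cases are handled by tailored constructions exploiting the forced positivity of $g_1$ (or $g_2$): for instance, when $a_1=0$ with $b_1,b_2>0$, set $d_{11}:=g_1$, $d_{12}:=0$, and let $d_{21},d_{22}$ be determined by the remaining sum equations, where consistency follows from a short computation using $g_1+g_2=h_1+h_2$. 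The enumeration of such degenerate configurations is tedious but routine once the main-case shifting argument is in place.
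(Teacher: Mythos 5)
The paper itself gives no proof of Theorem \ref{th:3.3}; it is imported from \cite[Thm 3.3]{Dvu2015}, so there is nothing internal to compare you against line by line. Judged on its own, your argument is correct. The forward direction (lifting equations to $(a_i,0)$ and $(0,g_i)$) is standard and fine. In the backward direction your key move is to use Proposition \ref{pr:RDP5} in the incomparable case to make all four $A$-entries strictly positive, so that the $G$-entries only have to satisfy the four sum identities and carry no positivity constraint; this is precisely where the antilattice hypothesis does its work, and your shifted table $d_{11}=-s+e_{11}$, $d_{12}=e_{12}$, $d_{21}=e_{21}$, $d_{22}=e_{22}-t$ does verify all four row and column equations without any commutativity assumption. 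Structurally this runs parallel to the paper's own proof of Theorem \ref{th:RDP11}, where the incomparable case is instead resolved by com-directedness of $G$; your version trades that hypothesis for the antilattice property of $A$, which is the right trade for this theorem.

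One caution about the part you wave off as routine: the degenerate sub-cases genuinely require the right choice of free parameter, not just bookkeeping. For instance, when $b_1=0$ and $a_1,a_2>0$, putting $d_{11}=0$ forces the fourth sum identity into $g_1-h_1+g_2=-h_1+g_1+g_2$, which holds only if $g_1$ and $h_1$ commute; the correct choice is $d_{11}=h_1$ (the coordinate whose positivity is forced by $b_1=0$), which gives $d_{21}=0$ and closes all four equations. Your worked sub-case $a_1=0$ with $d_{11}=g_1$ is exactly the right instance of this pattern, and every other degenerate configuration succumbs to choosing the free entry from $\{0,g_1,g_2,h_1,h_2\}$ so that the necessary conjugation identity holds and the forced-zero positions of the $A$-table receive non-negative $G$-entries. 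So the omission is expositional, not a gap.
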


We note that we do not know whether Theorem \ref{th:3.3} holds without assumption $A$ is an antilattice po-group. In what follows we extend Theorem \ref{th:3.3}.

It is very interesting to mention that in Lemma \ref{le:RxR}, the po-group $A=\mathbb R \times \mathbb R$ is an Abelian antilattice po-group with RDP and as well as RDP$_1$, but as we have seen, if $G$ is a directed non-Abelian po-group with RDP$_1$, then $A \lex G$ has RDP but RDP$_1$ fails. So the assumption that $A$ is an antilattice is not a guarantee to be $A\lex G$ with RDP$_1$ if $G$ has RDP$_1$.

We remind that if $A$ is a linearly ordered group and $G$ is a directed po-group with RDP$_1$, then $A\lex G$ satisfies RDP$_1$, see \cite[Thm 3.3]{Dvu2015}.

The following result is motivated by Lemma \ref{le:RxR}.

\begin{thm}\label{th:RDP3}
Let $\{A_i\mid i \in I\}$ be a system of non-trivial linearly ordered groups such that, for each $i\in I$, if $a \in A_i$, such that $a>0$, then there is $a_0$ in $A_i$ with $0<a_0<a$. Let the direct product $A=\prod_{i\in I} A_i$ be endowed with the strict product ordering, and $G$ be a directed po-group with \RDP. Then $A\lex G$ has \RDP\, whenever every $A_i$ is Abelian.

If $G$ is a non-Abelian po-group  with \RDP$_1$ and every $A_i$ is Abelian, then $A\lex G$ has \RDP\,  but \RDP$_1$\, fails if $|I| >1$.

\end{thm}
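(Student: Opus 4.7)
The plan is to adapt the argument of Lemma \ref{le:RxR} (the case $|I|=2$ and $A_{i_1}=A_{i_2}=\mathbb{R}$) to an arbitrary index set, using Theorem 3.1 of \cite{Dvu2015} as the workhorse for a single linearly ordered factor and relying on the density hypothesis in each $A_i$ for the necessary perturbations. The key structural observation is that under the strict product ordering on $A$ every element of $A^+$ is either the zero tuple or has all coordinates strictly positive; hence each entry of a candidate RDP table for $A\lex G$ is either \emph{forced-zero} (its $A$-part must vanish because some relevant $A$-row/column sum is itself zero) or \emph{free} (and we aim to make its $A$-part strictly positive in every coordinate).

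For the \RDP\ assertion, take positive elements $((\alpha^i)_{i\in I},x), ((\beta^i)_{i\in I},y), ((\gamma^i)_{i\in I},z), ((\delta^i)_{i\in I},u)$ of $A\lex G$ with the required sum equality. Fix $i_0\in I$ and apply Theorem 3.1 of \cite{Dvu2015} to the $i_0$-projected equation in $A_{i_0}\lex G$ (admissible because $A_{i_0}$ is linearly ordered and $G$ satisfies \RDP) to obtain a table $(n^{i_0}_{jk},c_{jk})$; for each $i\in I\setminus\{i_0\}$ apply \RDP\ in the linearly ordered Abelian group $A_i$ to $\alpha^i+\beta^i=\gamma^i+\delta^i$ to obtain a layer $(n^i_{jk})$. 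Assemble these into the combined table $((n^i_{jk})_{i\in I},c_{jk})$ and verify positivity position by position. At forced-zero positions the tuple $(n^i_{jk})_{i\in I}$ vanishes uniformly in $i$ and $c_{jk}\ge 0$ holds in $G$, because the same forced zero appears in the $i_0$-layer and positivity of $(0,c_{jk})$ in $A_{i_0}\lex G$ reduces to $c_{jk}\ge 0$; at the free positions the density hypothesis on each $A_i$ allows the perturbation trick from Lemma \ref{le:RxR} (replacing $n^i_{jk}$ by $n^i_{jk}\pm n^i_0$ with $0<n^i_0<\min(n^i_{12},n^i_{21})$ chosen in each coordinate independently) to turn every offending zero into a strictly positive value uniformly in $i$, after which $c_{jk}$ is unrestricted.

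For the failure of \RDP$_1$ when $|I|>1$ and $G$ is non-Abelian with \RDP$_1$, pick distinct $i_1,i_2\in I$ and, using density in each $A_i$, construct strictly positive $\alpha,\beta,\gamma,\delta\in A$ with $\alpha+\beta=\gamma+\delta$ and $\alpha\|\gamma$ in the strict product ordering (for instance, arrange $\alpha^{i_1}<\gamma^{i_1}$ and $\alpha^{i_2}>\gamma^{i_2}$ while keeping the remaining coordinates equal, and choose $\beta,\delta$ to satisfy the componentwise identity). Choose $x,y\in G$ with $x+y\ne y+x$ and consider $((\alpha^i)_i,x)+((\beta^i)_i,y)=((\gamma^i)_i,x)+((\delta^i)_i,y)$ in $A\lex G$. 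For any \RDP\ table $(c_{ij})$ the incomparability $\alpha\|\gamma$ forces, by the Lemma \ref{le:RxR} argument, that the $A$-projections $\pi_A(c_{12})$ and $\pi_A(c_{21})$ are nonzero, hence strictly positive in every coordinate of $A$. Density in each $A_i$ then yields $s=(s^i)_{i\in I}\in A$ with $0<s^i<\min(\pi_A(c_{12})^i,\pi_A(c_{21})^i)$ for every $i$, so that $((s^i)_i,x)\le c_{12}$ and $((s^i)_i,y)\le c_{21}$ in $A\lex G$; these two elements fail to commute because their $G$-parts do, contradicting \RDP$_1$.

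The main technical obstacle lies in Part 1: aligning the layerwise $A_i$-decompositions for $i\ne i_0$ with the single set of $G$-components extracted from the $i_0$-layer. Compatibility at forced-zero positions is automatic from the uniformity of the zero pattern across coordinates, whereas at free positions the density assumption on each $A_i$ is exactly what allows independent coordinate-by-coordinate perturbations that simultaneously make every free $A$-entry strictly positive without disturbing either the row/column sums in $A$ or the already-fixed $c_{jk}$ in $G$.
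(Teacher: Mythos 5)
Your argument is correct, and the second half (failure of \RDP$_1$ for $|I|>1$) coincides with the paper's proof: the same incomparability argument forcing $\pi_A(c_{12}),\pi_A(c_{21})>0$, followed by a density-supplied element $s$ below both and the non-commuting pair $x,y$ in the $G$-slot. For the first half, however, you take a genuinely different route. The paper never glues an $A_{i_0}\lex G$ table to layerwise $A_i$ tables; instead it proves two properties of $A=\prod_{i\in I}A_i$ with the strict product ordering --- that $A$ is an antilattice (if $c$ were the infimum of incomparable $a,b$, density in each linearly ordered $A_i$ would produce $d^i$ with $c^i<d^i<a^i,b^i$, contradicting infimality) and that $A$ has \RDP\ (via exactly your coordinatewise perturbation) --- and then simply invokes Theorem~\ref{th:3.3} (i.e.\ \cite[Thm 3.3]{Dvu2015}) to conclude that $A\lex G$ has \RDP. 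Your approach is the direct generalization of Lemma~\ref{le:RxR}: distinguish a coordinate $i_0$, extract the $G$-components $c_{jk}$ from a single application of \cite[Thm 3.1]{Dvu2015} to $A_{i_0}\lex G$, and check positivity of the assembled entries by the forced-zero versus free dichotomy. This is more self-contained but requires the compatibility check you describe (which does go through: a position is forced-zero exactly when the relevant row or column sum in $A$ vanishes, a condition uniform in $i$, so $n^{i_0}_{jk}=0$ as well and positivity in $A_{i_0}\lex G$ yields $c_{jk}\ge 0$; when no entry is forced-zero all four of $\alpha,\beta,\gamma,\delta$ have all coordinates strictly positive and the perturbation applies). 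The paper's route is more modular and reuses the antilattice machinery; one small caveat in yours is that the bound $0<n^i_0<\min(n^i_{12},n^i_{21})$ is the right perturbation only when the offending zero sits at position $11$ or $22$ --- for a zero at $12$ or $21$ you must instead take $n^i_0$ below $\min(n^i_{11},n^i_{22})$ and shift along the other diagonal --- but this is the same level of detail the paper itself elides.
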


\begin{proof}
First we show that $A = \prod_{i\in I} A_i$ is an antilattice po-group with RDP. Let $c=(c^i)_{i \in I}\in A$   be the infimum of two mutually non-comparable elements  $a=(a^i)_{i \in I}\in A$ and $b=(b^i)_{i \in I} \in A$. Then $c^i <a^i,b^i$ for each $i$. Since $a^i$ and $b^i$ are comparable, there is an element $d^i$ such that $c^i < d^i <a^i, b^i$ which says $c < (d^i)_{i \in I} <a,b$, an absurd.  Hence, $A$ is an antilattice.

Assume every $A_i$ is Abelian.
Let $a=(a^i)_{i \in I}\in A^+$, $b=(b^i)_{i \in I} \in A^+$, $c=(c^i)_{i \in I}\in A^+$, and $d=(d^i)_{i \in I}\in A^+$ be such that
$$
a+b=c+d.\eqno(3.0)
$$
Then every $a^i, b^i, c^i, d^i \ge 0$. If one of $a,b,c,d$ is zero, an RDP table for (3.0) is evident. So assume that every $a^i,b^i,c^i,d^i$ is strictly positive for each $i \in I$. Then using RDP in each $A_i$, we have an RDP table

\[
\begin{matrix}
a^i &\vline & n^i_{11} & n^i_{12}\\
b^i &\vline & n^i_{21} & n^i_{22}\\
  \hline     &\vline      &c^i & d^i
\end{matrix}\ \ (A) .
\]

If, say $n^i_{11}=0$, then $n^i_{12}>0$ and $n^i_{21}>0$. Since $n^i_{12}$ and $n^i_{21}$ are ordered, there is $n_0^i \in A_i$ such that $0<n_0^i < n^i_{12},n^i_{21}$. Then in the table

\[
\begin{matrix}
a^i &\vline & n^i_{11}+n^i_0 & n^i_{12}-n^i_0\\
b^i &\vline & n^i_{21}-n^i_0 & n^i_{22}+n^i_0\\
  \hline     &\vline      &c^i & d^i
\end{matrix}\ \
\]
all entries are strictly positive, so that we can assume that all entries in (A) are strictly positive.

\[
\begin{matrix}
(a^i)_{i \in I} &\vline & (n_{11}^i)_{i \in I} & (n_{12}^i)_{i \in I}\\
(b^i)_{i \in I} &\vline & (n_{21}^i)_{i \in I} & (n_{22}^i)_{i \in I}\\
  \hline     &\vline      &(c^i)_{i \in I} & (d^i)_{i \in I}
\end{matrix}\ \
\]
is an RDP table for (2.1).

Since $A$ is an antilattice with RDP, by \cite[Thm. 3.3]{Dvu2015}, $A\lex G$ has RDP.

Now assume that $G$ is a non-Abelian po-group with RDP$_1$ and let $|I|>1$. By the first part of the present proof, $A\lex G$ has RDP. In what follows, we show that RDP$_1$ fails.

Since, every $A_i$ is non-trivial, it has infinitely many elements, and the index set has at least two elements, fix $A_1,A_2$, and take two fixed elements $0<a \in A_1$ and $0<b\in A_2$.  Then for the strictly positive elements $(1a,4b), (3a,7b), (2a,3b), (2a,8b)$, we have $(1a,4b)+(3a,7b)= (2a,3b)+ (2a,8b)$. Similarly, as in the proof of Lemma \ref{le:RxR}, take two elements $x,y \in G$ such that $x+y\ne y+x$.  Define positive elements
$((a^i)_{i\in I},x)$, $((b^i)_{i\in I},y)$, $((c^i)_{i\in I},x)$ and $((d^i)_{i\in I},y)$ in $(A\lex G)^+$ such that $a^1=1a, a^2=4b$, $b^1 =3a, b^2=7b$, $c^1=2a, c^2=3b$, and  $d^1=2a, d^2=8b$.

Now we show that any RDP table

\[
\begin{matrix}
((a^i)_{i \in I},x) &\vline & ((n_{11}^i)_{i \in I},c_{11}) & ((n_{12}^i)_{i\in I},c_{12})\\
((b^i)_{i \in I},y) &\vline & ((n_{21}^i)_{i \in I},c_{21}) & ((n_{22}^i)_{i \in I},c_{22})\\
  \hline     &\vline      &((c^i)_{i \in I},x) & ((d^i,y)
\end{matrix}\ \  (B)
\]
for $((a^i)_{i\in I},x)+((b^i)_{i\in I},y)=((c^i)_{i\in I},x)+((d^i)_{i\in I},y)$ gives no RDP$_1$ table. In fact, we have two kinds of equations
$$((1a,4b),x)+ ((3a,7b),y)= ((2a,3b)x) +((2a,8b),y)$$
and
$$ a^i+b^i=c^i+d^i$$
for each $i \in I\setminus \{1,2\}$.

For the first one we have from (B) the following RDP table

$$
\begin{matrix}
((1a,4b),x)  &\vline & ((n^1_{11},n^2_{11},c_{11}) & ((n^1_{12},n^2_{12}),c_{12})\\
((3a,7b),y) &\vline & ((n^1_{21},n^2_{21}),c_{21}) & ((n^1_{22},n^2_{22}),c_{22})\\
  \hline     &\vline      &((2a,3b),x) & ((2a,8b),y)
\end{matrix}\ \ .
$$

The elements $(3a,7b)$ and $(2a,3b)$ are non-comparable as well as are $(1a,4b)$ and $(2a,8b)$. Therefore, $n^1_{12},n^2_{12}, n^1_{21},n^2_{21}>0$. There are non-zero elements $s\in A_1$ and $t\in A_2$ such that $0<s< n^1_{12},n^1_{21}$ and $0<t< n^2_{12},n^2_{21}$. Hence $0\le ((s,t),x)\le (n^1_{12},n^2_{12}),c_{12})$ and $((s,t),x)\le (n^1_{21},n^2_{21}),c_{21})$ but $((s,t),x)+((s,t),y) \ne ((s,t),y)+((s,t),x)$.

Therefore, RDP$_1$ fails for $A\lex G$.
\end{proof}

\begin{thm}\label{th:RDP10}
	Let $\{A_i\mid i\in I\}$ be a family of linearly ordered po-groups and $G$ be a directed Abelian po-group with $\RDP_1$.
	Then $(\prod_{i\in I}A_i)\lex G$ has $\RDP_1$.
\end{thm}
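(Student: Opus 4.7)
The plan is to equip $A:=\prod_{i\in I}A_i$ with the coordinatewise ordering (making it an $\ell$-group) and build the $\RDP_1$ table in $A\lex G$ one coordinate at a time, exploiting that $A$ inherits $\RDP_2$ from the linearity of the $A_i$ and that $G$-side commutativity is free from Abelianness.

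Since each $A_i$ is a chain, $A$ is a directed $\ell$-group with coordinatewise join and meet, so by Proposition~4.2 of \cite{DvVe1} combined with $(2.1)$, $A$ satisfies $\RDP_2$. Given a positive equation $(\alpha,x)+(\beta,y)=(\gamma,u)+(\delta,v)$ in $(A\lex G)^+$, the underlying group being the direct product, this splits into $\alpha+\beta=\gamma+\delta$ in $A$ and $x+y=u+v$ in $G$, with $\alpha,\beta,\gamma,\delta\in A^+$ (lex-positivity forces each first coordinate into $A^+$). Apply $\RDP_2$ in $A$ to get $\alpha_{ij}\in A^+$ with the four sum identities and $\alpha_{12}\wedge\alpha_{21}=0$; coordinatewise this forces $\alpha_{12}^k=0$ or $\alpha_{21}^k=0$ for every $k\in I$. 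Next I would choose $g_{ij}\in G$ satisfying the four $G$-side sum identities $g_{11}+g_{12}=x$, $g_{21}+g_{22}=y$, $g_{11}+g_{21}=u$, $g_{12}+g_{22}=v$, with the lex-positivity proviso that $g_{ij}\ge 0$ whenever $\alpha_{ij}=0$. When all $\alpha_{ij}$ are non-zero the constraints are vacuous and any solution to the $G$-equations works; otherwise one uses directedness of $G$ to shift $x+y=u+v$ into $G^+$ and then applies $\RDP_1$ of $G$ (equivalently $\RDP$, since $G$ is Abelian) to produce a positive refinement, placing the positive $g_{ij}$ in the zero-$\alpha$ slots. Setting $c_{ij}:=(\alpha_{ij},g_{ij})$ then yields a valid $\RDP$ table for the original equation in $A\lex G$.

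To verify the $\RDP_1$ condition, take $(s,z),(t,w)\in (A\lex G)^+$ with $(s,z)\le c_{12}=(\alpha_{12},g_{12})$ and $(t,w)\le c_{21}=(\alpha_{21},g_{21})$. Reading first coordinates off the lex order gives $s\le \alpha_{12}$ and $t\le \alpha_{21}$ in $A$, hence $0\le s^k\le \alpha_{12}^k$ and $0\le t^k\le \alpha_{21}^k$ in each chain $A_k$; since $\alpha_{12}^k=0$ or $\alpha_{21}^k=0$, one of $s^k,t^k$ is zero, so $s^k+t^k=t^k+s^k$ holds trivially in $A_k$. Summing over $k$ gives $s+t=t+s$ in $A$, and Abelianness of $G$ yields $z+w=w+z$, whence
\[
(s,z)+(t,w) \;=\; (s+t,\,z+w) \;=\; (t+s,\,w+z) \;=\; (t,w)+(s,z).
\]
The main technical obstacle is the case analysis for choosing the $g_{ij}$ when some $\alpha_{ij}$ vanish, since the corresponding $g_{ij}$ must land in $G^+$; this is a standard shift-and-decompose manoeuvre using that $G$ is directed with $\RDP_1$, but once an $\RDP$ table is in hand the $\RDP_1$ commutativity falls out immediately from the coordinatewise $\RDP_2$ of $A$ together with Abelianness of $G$.
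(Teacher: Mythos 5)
Your proposal is correct, and it reaches the conclusion by a cleaner route on the $A$-side than the paper does. Where you invoke $\RDP_2$ of the $\ell$-group $A=\prod_{i}A_i$ once to obtain a refinement with $\alpha_{12}\wedge\alpha_{21}=0$, the paper instead runs a four-way case analysis on how $(a^i)$ and $(c^i)$ compare in the coordinatewise order ($<$, $>$, $=$, incomparable) and, in the incomparable case, builds the disjoint off-diagonal entries by hand by partitioning $I$ into $\{i: a^i<c^i\}$, $\{i: c^i<a^i\}$, $\{i: a^i=c^i\}$ --- which is precisely an explicit $\RDP_2$ table. So the key mechanism is the same in both arguments: disjointness of the off-diagonal entries in the first coordinate together with Abelianness of $G$ makes the $\RDP_1$ commutativity condition automatic. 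The step you defer as a ``standard shift-and-decompose manoeuvre'' is where the paper spends most of its effort (subcases (i)--(iv) of its case (3), and the $d$-shift in case (4)), and it does close: putting $g_{11}=s$, the remaining entries are forced to be $x-s$, $u-s$ and $s-u+y$, and the positivity constraints coming from the vanishing $\alpha_{ij}$ read $0\le s$, $s\le x$, $s\le u$, $u-y\le s$, each active only on a zero cell; whenever a whole row or column of the $\alpha_{ij}$ vanishes the corresponding one of $x,y,u,v$ is automatically nonnegative, so every active lower bound lies below every active upper bound, and directedness together with RIP of $G$ (which follows from $\RDP_1$ via Lemma \ref{le:RIP}) always supplies such an $s$. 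Two things are worth making explicit if you write this up: that verification, and the fact that $\prod_i A_i$ is taken with the coordinatewise ordering --- the theorem statement leaves the order unspecified, and both your argument and the paper's depend on that choice.
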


\begin{proof}
Let elements $((a^i)_{i \in I},x),((b^i)_{i \in I},y),((c^i)_{i \in I},z),((d^i)_{i \in I},u)\in (\prod_{i\in I}A_i \lex G)^+$ satisfy
$((a^i)_{i \in I},x)+((b^i)_{i \in I},y)=((c^i)_{i \in I},z)+((d^i)_{i \in I},u)$.
 Then clearly, $(a^i)_{i \in I},(b^i)_{i \in I},(c^i)_{i \in I},(d^i)_{i \in I}\geq (0)_{i\in I}$. Since every $A_i$ is linearly ordered, $A_i$ is an $\ell$-group, so it satisfies RDP$_2$ and RDP$_1$.

(1) If $(a^i)_{i \in I}< (c^i)_{i \in I}$, then for each $i\in I$, $a^i\leq c^i$ and there exists $j\in I$ such that $a_j<c_j$
and so we have the following RDP$_1$ table:
\[
\begin{matrix}
((c^i)_{i \in I},z) &\vline & ((a^i)_{i \in I},x) & ((-a^i+c^i)_{i \in I},-x+z)\\
((d^i)_{i \in I},u) &\vline & ((0)_{i \in I},0) & ((d^i)_{i \in I},u)\\
  \hline     &\vline      &((a^i)_{i \in I},x) & ((b^i)_{i \in I},y)
\end{matrix}\ \ .
\]

(2) If $(a^i)_{i \in I}> (c^i)_{i \in I}$, then similarly to (1) we can find an RDP$_1$  table.

(3) If $(a^i)_{i \in I}= (c^i)_{i \in I}$, then clearly $(b^i)_{i \in I}=(d^i)_{i \in I}$.

(i) If $(a^i)_{i \in I}=(0)_{i\in I}$ and $(b_{i})_{i\in I}=(0)_{i\in I}$, then clearly we can find an RDP$_1$  table.

(ii) If $(a^i)_{i \in I}=(0)_{i\in I}$ and $(b_{i})_{i\in I}\neq (0)_{i\in I}$, then we have $x,z\geq 0$. Let $t$ be a lower bound for
$y,u$. Consider an RDP$_1$  decomposition for
$x+(y-t)=z+(u-t)$ as follows:
\[
\begin{matrix}
z &\vline & c_{11} & c_{12}\\
u-t &\vline & c_{21} & c_{22}\\
  \hline     &\vline      & x & y-t
\end{matrix}\ \ .
\]
Then we have
\[
\begin{matrix}
z &\vline & c_{11} & c_{12}\\
u &\vline & c_{21} & c_{22}+t\\
  \hline     &\vline      & x  & y
\end{matrix}\ \  \quad (*)
\]
and so
\[
\begin{matrix}
((c^i)_{i \in I},z) &\vline & ((0)_{i\in I},c_{11}) & ((0)_{i\in I},c_{12})\\
((d^i)_{i \in I},u) &\vline & ((0)_{i\in I},c_{21}) & ((b^i)_{i \in I},c_{22}+t)\\
  \hline     &\vline      &((a^i)_{i \in I},x) & ((b^i)_{i \in I},y)
\end{matrix}\ \
\]
is an RDP$_1$  table for $((a^i)_{i \in I},x)+((b^i)_{i \in I},y)=((c^i)_{i \in I},z)+((d^i)_{i \in I},u)$.

(iii)  If $(a^i)_{i \in I}\neq (0)_{i\in I}$ and $(b_{i})_{i\in I}= (0)_{i\in I}$,  then similarly to (iii) we can find an RDP$_1$  table for
$((a^i)_{i \in I},x)+((b^i)_{i \in I},y)=((c^i)_{i \in I},z)+((d^i)_{i \in I},u)$.

(iv) Let $(a^i)_{i \in I}\neq (0)_{i\in I}$ and $(b_{i})_{i\in I}\neq (0)_{i\in I}$. Since $G$ is directed, there exists $t\in G$ such that $t\leq x,y,z,u$.  Let
\[
\begin{matrix}
(-t+z) &\vline & c_{11} & c_{12}\\
(u-t) &\vline & c_{21} & c_{22}\\
  \hline     &\vline      &(-t+x) & (y-t)
\end{matrix}\ \
\]
be an RDP$_1$  table for $(-t+z)+(u-t)=(-t+x)+(y-t)$. Then we have
\[
\begin{matrix}
z &\vline & t+c_{11} & c_{12}\\
u &\vline & c_{21} & c_{22}+t\\
  \hline     &\vline      & x & y
\end{matrix}\ \ . \quad (**)
\]
It gives an RDP$_1$  table for $((a^i)_{i \in I},x)+((b^i)_{i \in I},y)=((c^i)_{i \in I},z)+((d^i)_{i \in I},u)$
\[
\begin{matrix}
((c^i)_{i \in I},z) &\vline & ((a^i)_{i \in I},t+c_{11}) & ((0)_{i\in I},c_{12})\\
((d^i)_{i \in I},u) &\vline & ((0)_{i\in I},c_{21}) & ((d^i)_{i \in I},c_{22}+t)\\
  \hline     &\vline      &((a^i)_{i \in I},x) & ((b^i)_{i \in I},y)
\end{matrix}\ \ .
\]

(4) Let $(c^i)_{i \in I}$ and $(a^i)_{i \in I}$ be not comparable. Consider the following subsets of $I$
	$$I_1:=\{i\in I|\ a^i<c^i\},\quad\!\! I_2:=\{i\in I|\ c^i<a^i\},\quad\!\! I_3:=\{i\in I|\ a^i=c^i\}.
	$$
	By the assumption, $I_1\neq\emptyset$ and $I_2\neq\emptyset$, and if $i \in I_3$, then $d^i=b^i$.
	Set
	
	\begin{equation}
		e^i= \left\{\begin{array}{lllll}
			a^i & \mbox{ if }  i\in I_1\\
			c^i & \mbox{ if } i\in I_2\\
			a^i& \mbox{ if } i\in I_3
		\end{array}\right.
		\hspace{1cm}
		f^i= \left\{\begin{array}{llll}
			-a^i+c^i  & \mbox{ if } i\in I_1\\
			0 & \mbox{ if } i\in I_2 \\
			0 &  \mbox{ if } i\in I_3
		\end{array}\right.
	\end{equation}
	\begin{equation}
		g^i= \left\{\begin{array}{lllll}
			0 & \mbox{ if } i\in I_1\\
			-c^i+a^i & \mbox{ if } i\in I_2\\
			0& \mbox{ if } i\in I_3
		\end{array}\right.
		\hspace{1cm}
		h^i= \left\{\begin{array}{llll}
			d^i  & \mbox{ if } i\in I_1\\
			b^i & \mbox{ if } i\in I_2 \\
			d^i &  \mbox{ if } i\in I_3.
		\end{array}\right.
	\end{equation}
	Since $G$ is directed, there exists a $d\in G$ such that $d\leq x,y,z,u$.
	Let
		\[
		\begin{matrix}
		-d+z &\vline & c_{11} & c_{12}\\
		u-d &\vline & c_{21} & c_{22}\\
		\hline     &\vline      &-d+x & y-d
		\end{matrix}\ \
		\]
be an \RDP$_1$ table for $(-d+x)+(y-d)=(-d+z)+(u-d)$. Since $G$ is Abelian,  we have
			\[
			\begin{matrix}
			z &\vline & c_{11} & c_{12}+d\\
			u &\vline & c_{21}+d & c_{22}\\
			\hline     &\vline      & x & y
			\end{matrix}\ \ .
			\]
Then
	\[
	\begin{matrix}
	((c^i)_{i \in I},z) &\vline & ((e^i)_{i \in I},c_{11}) & ((f^i)_{i \in I},c_{12}+d)\\
	((d^i)_{i \in I},u) &\vline & ((g^i)_{i \in I},c_{21}+d) & ((h^i)_{i \in I},c_{22})\\
	\hline     &\vline      &((a^i)_{i \in I},x) & ((b^i)_{i \in I},y)
	\end{matrix}\ \
	\]
	is an RDP$_1$ table for $((a^i)_{i \in I},x)+((b^i)_{i \in I},y)=((c^i)_{i \in I},z)+((d^i)_{i \in I},u)$. Indeed,
	let $((0)_{i\in I},0)\leq ((k^i)_{i\in I},w)\leq  ((f^i)_{i \in I},c_{12}+d)$ and
	$((0)_{i\in I},0)\leq ((m^i)_{i\in I},v)\leq  ((g^i)_{i \in I},c_{21}+d)$. Then
	$(0)_{i\in I}\leq (k^i)_{i\in I}\leq (f^i)_{i \in I}$ and $(0)_{i\in I}\leq (m^i)_{i\in I}\leq (g^i)_{i \in I}$.
	If $i\in I-I_1$, then $f^i=0$ and so $k^i=0$. That is, $k^i=0$  for all $i\in I-I_1$.  Similarly,
	$m^i=0$ for all $i\in I-I_2$. Put $i\in I$. If $m^i\neq 0$, then $i\in I_2$, so $k^i=0$. It follows that $m^i+k^i=k^i+m^i$.
	For $m^i=0$ clearly, $m^i+k^i=k^i+m^i$. Thus $(m^i)_{i\in I}+(k^i)_{i\in I}=(k^i)_{i\in I}+(m^i)_{i\in I}$. Since $G$ is Abelian,
	then we have $((m^i)_{i\in I},v)+((k^i)_{i\in I},w)=((k^i)_{i\in I},w)+((m^i)_{i\in I},v)$. 	
	
	From (1)--(4) we get that $(\prod_{i\in I}A_i)\lex G$ has $\RDP_1$.
\end{proof}

\section{Com-Directness Property and Lexicographic Product}

In this section we show that if we put more general conditions to the second factor, $G$, of the lexicographic product $A\lex G$, than $G$ is Abelian, we can extend the class of po-groups  with RDP such that $A\lex G$ has RDP for each po-group $A$ with RDP. Such a condition is the com-directness of $G$.

For the aims of the following theorem, we introduce a stronger form of the directness of a po-group which is motivated as follows. If $G$ is a po-group, then for each $x,y\in G^+$, there is $d \in G$ such that $d\le x,y$  and $d$ commutes with $x$ and with $y$; in this case such $d$ can be trivially used $d=0$. The same is true for a directed Abelian po-group. Therefore, we say that a po-group $G$ is {\it com-directed} (com stands for the commutativity), if given $x,y\in G$, there is a $d\in \mbox{Z}(G)$, where $\mbox{Z}(G)$ is the center of $G$, such that $d\le x,y$.  This is equivalent, given $x,y \in G$, there is a $d\in \mbox{Z}(G)$ such that $x,y \le d$. Of course, if $G$ is com-directed, then $G$ is directed, too. If $G$ is Abelian, then both notions, directness and com-directness, coincide.

We note that if a po-group $G$ is com-directed, then it does not follow that $G$ is Abelian. Indeed, let $G$ be a po-group that is not Abelian. Then $\mathbb Z \lex G$ is a com-directed po-group that is not Abelian; indeed, given $(n,g),(m,h) \in \mathbb Z \lex G$,  any element $d=(k,0)\in \mathbb Z\lex G$, where $k <n,m$, is from the center $\mbox{Z}(\mathbb Z\lex G)$, and we have $d<(n,g),(m,h)$.

\begin{thm}\label{th:RDP11}
Let $(A;+,0)$ be a po-group and $(G;+,0)$ be a com-directed po-group. Then $A\lex G$ satisfies {\rm RDP} if and only if both $A$ and $G$ satisfy \RDP.
\end{thm}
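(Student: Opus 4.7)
The easy direction, that $A\lex G$ having \RDP\ implies both $A$ and $G$ have it, proceeds by restriction. Embed $A\hookrightarrow A\lex G$ via $a\mapsto (a,0)$ and $G\hookrightarrow A\lex G$ via $x\mapsto (0,x)$; both are po-group embeddings. For an $A$-instance $a_1+a_2=b_1+b_2$ in $A^+$, apply \RDP\ in $A\lex G$ to the lifted elements $(a_i,0),(b_j,0)$ and read off first coordinates. For a $G$-instance $x_1+x_2=y_1+y_2$ with $x_i,y_j\ge 0$, apply \RDP\ in $A\lex G$ to $(0,x_i),(0,y_j)$; in the resulting table $((n_{ij},g_{ij}))$ the row sums $n_{11}+n_{12}=n_{21}+n_{22}=0$ with $n_{ij}\ge 0$ force each $n_{ij}=0$, so $g_{ij}\ge 0$ by lex positivity, giving the desired $G$-table.

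For the converse, assume both $A$ and $G$ have \RDP\ with $G$ com-directed, and fix $(a_i,x_i),(b_j,y_j)\ge (0,0)$ with the sum equation. Projecting yields $a_1+a_2=b_1+b_2$ in $A^+$ and $x_1+x_2=y_1+y_2$ in $G$. Apply \RDP\ in $A$ to obtain $(n_{ij})\in (A^+)^4$. By iterated com-directness pick $d\in\mbox{Z}(G)$ with $d\le 0,x_1,x_2,y_1,y_2$, so that $\tilde x_i:=x_i-d$ and $\tilde y_j:=y_j-d$ all lie in $G^+$ and the centrality of $d$ yields $\tilde x_1+\tilde x_2=\tilde y_1+\tilde y_2$. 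Apply \RDP\ in $G$ to produce $(h_{ij})\in (G^+)^4$; any such table automatically has $h_{12}+h_{21}=h_{21}+h_{12}$. Take $c_{ij}:=(n_{ij},h_{ij}+\delta_{ij})$ with central shifts $\delta_{ij}\in\mbox{Z}(G)$ whose row and column sums all equal $d$; solving gives the one-parameter family $\delta_{11}=\delta_{22}=s$ and $\delta_{12}=\delta_{21}=d-s$ for a free $s\in\mbox{Z}(G)$. The four $A\lex G$-\RDP\ sum equations then follow from centrality of $d$ and $s$ and the commutativity of $h_{12},h_{21}$.

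What remains is to check $c_{ij}\ge (0,0)$, which is automatic whenever $n_{ij}>0$ and equivalent to $g_{ij}\ge 0$ when $n_{ij}=0$. Split on which $a_i,b_j$ vanish. If all $a_i,b_j>0$, at most two $n_{ij}$ can equal zero and they must lie on a single (anti-)diagonal: $s=0$ handles diagonal zeros (then $g_{11},g_{22}=h_{11},h_{22}\ge 0$) and $s=d$ handles anti-diagonal ones. If some $a_i$ or $b_j$ vanishes, the corresponding row or column of $(n_{ij})$ is forced to zero while the corresponding $x_i$ or $y_j$ is already $\ge 0$ by lex positivity; here the symmetric $d$-shift is replaced by an asymmetric one that leaves those nonnegative coordinates fixed. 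The most delicate subcase is $a_1=b_1=0$ and $a_2=b_2>0$ (so $n_{11}=n_{12}=n_{21}=0$, $n_{22}>0$, $x_1,y_1\ge 0$): pick $t\in\mbox{Z}(G)$ with $t\le x_2,y_2$ and apply \RDP\ in $G$ to the fully-nonnegative $x_1+(x_2-t)=y_1+(y_2-t)$; set $g_{ij}=h_{ij}$ for $(i,j)\ne (2,2)$ and $g_{22}=h_{22}+t$. Positivity of $g_{22}$ is not required since $n_{22}>0$, and the row~2 and column~2 sums recover $x_2,y_2$ by the cancellations $(x_2-t)+t=x_2$ and $(y_2-t)+t=y_2$. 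Analogous asymmetric shifts handle the remaining zero-patterns, always absorbing the correction into a corner where $n_{ij}>0$. The main obstacle is the bookkeeping across these sub-patterns; com-directness of $G$ is essential precisely because the generic case (all $a_i,b_j>0$) forces a symmetric shift whose balance requires it to be central, so $d\in\mbox{Z}(G)$, and only then does the construction $g_{ij}=h_{ij}+\delta_{ij}$ meet both the \RDP\ sum equations and the positivity constraints.
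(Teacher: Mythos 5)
Your proof is correct, but it is organized quite differently from the paper's. The paper first splits on whether the first components $x_1,y_1\in A$ are comparable: in the comparable case it builds nine explicit ``trivial'' refinements (one entry of the table is a whole summand, another is $(0,0)$), using only directness of $G$ and never invoking \RDP\ of $A$; only in the incomparable case does it apply \RDP\ in $A$ and bring in the central lower bound $d\in\mbox{Z}(G)$. You instead always start from an \RDP\ table $(n_{ij})$ in $A$ and case-split on its zero pattern, resolving the generic case with the one-parameter family of central shifts $\delta_{11}=\delta_{22}=s$, $\delta_{12}=\delta_{21}=d-s$. This is more uniform, and it correctly exploits the key combinatorial fact that when all four summands of the $A$-equation are nonzero the zeros of $(n_{ij})$ lie on a single diagonal, so one choice of $s\in\{0,d\}$ restores positivity exactly where it is needed; your delicate subcase $a_1=b_1=0$, $a_2=b_2>0$ is also handled correctly (and there, as you implicitly note, $t$ need not be central). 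What the paper's organization buys is that it makes visible that com-directness is used only when $x_1\|y_1$ (a point recorded in the remark following the theorem), whereas your argument invokes the central $d$ even in comparable subcases where plain directness suffices. One caution about the step you dispatch as ``analogous asymmetric shifts'': in the non-Abelian setting the corner that absorbs the unconstrained entry cannot be chosen arbitrarily. For instance, when $a_1=0$ and $a_2,b_1,b_2>0$, the choice $g_{11}=x_1$, $g_{12}=0$, $g_{21}=-x_1+y_1$, $g_{22}=y_2$ closes all four sum equations, while the equally natural $g_{11}=0$, $g_{12}=x_1$ forces $x_1-y_1=-y_1+x_1$ and fails in general. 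Every remaining zero pattern does admit such a table, so this is a matter of bookkeeping rather than substance, but those verifications should be written out.
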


\begin{proof}
Let $A\lex G$ satisfy RDP, and let $x_1+x_2 =y_1+y_2$ in $A$ for $x_1,x_2,y_1,y_2 \in A^+$. Then $(x_1,0)+(x_2,0)=(y_1,0)+(y_2,0)$ which easily implies that $x_1+x_2 =y_1+y_2$ has an RDP table in $A$. If now for $u_1,u_2,v_1,v_2 \in G^+$ we have $u_1+u_2=v_1+v_2$, then $(0,u_1)+(0,u_2)=(0,v_1)+(0,v_2)$. The RDP in $A\lex G$ implies that $u_1+u_2=v_1+v_2$ has an RDP table in $G$.

Now let $A$ and $G$ have RDP. If $G$ is the trivial po-group, i.e. $G=\{0\}$, then $A\lex G =A\lex \{0\} \cong A$ and $A\lex G$ satisfies RDP.

Let us assume that $G$ is a non-trivial com-directed po-group with RDP. Let us have elements $(x_1,u_1), (x_2,u_2),(y_1,v_1), (y_2,v_2) \in (A\lex G)^+$ such that
$$
(x_1,u_1)+ (x_2,u_2)=(y_1,v_1) + (y_2,v_2). \eqno(4.1)
$$

(I) First we assume that $x_1$ and $y_1$ are comparable. In view of (4.1), $x_2$ and $y_2$ are also comparable. We have the following 9 subcases.

(i) Let $(0,u_1)+(0,u_2) = (0,v_1)+ (0,v_2).$  Then $u_1,u_2,v_1,v_2 \in G^+$ and RDP for this case  follows from RDP for $G.$

(ii) $(0,u_1) + (x,u_2 )=
(0,  v_1 ) + (y,  v_2 )$ for $u_1,v_1 \ge 0$, $u_2,v_2 \in G$ for each  $x=y \in A^+\setminus\{0\}$. Then $u_1+u_2=v_1+v_2$. While $G$ is directed,  there is an element $d \in G$ such that $u_2,v_2 \ge d$. Then $u_1+(u_2-d)=v_1+(v_2-d)$ and for them we have an RDP decomposition

$$
\begin{matrix}
u_1 &\vline & c_{11} & c_{12}\\
u_2-d &\vline & c_{21} & c_{22}\\
  \hline     &\vline      &v_1 & v_2-d
\end{matrix}\ \ .
$$
 Then

$$
\begin{matrix}
u_1  &\vline & c_{11} & c_{12}\\
u_2 &\vline & c_{21} & c_{22}+d\\
  \hline     &\vline      &v_1 & v_2
\end{matrix}\ \
$$
and
$$
\begin{matrix}
(0, u_1)  &\vline & (0, c_{11}) & (0, c_{12})\\
(x, u_2) &\vline & (0, c_{21}) & (y, c_{22}+d)\\
  \hline     &\vline      &(0, v_1) & (y, v_2)
\end{matrix}\ \
$$
is an RDP decomposition for (ii) in the po-group $A\lex G$.

(iii) $(x,   u_1 ) + (0,  u_2 )=
(y,  v_1 ) + (0,  v_2 )$ for $u_2,v_2 \ge 0$, $u_1,v_1 \in G$ for  $x=y \in A^+\setminus \{0\}$. The directness of $G$ implies, there is $d \in G$ such that $d\le u_1,u_2,v_1,v_2$. Equality (iii) can be rewritten in the equivalent form $(x,   -d+u_1 ) + (0,  u_2-d )=
(y,  -d+v_1 ) + (0,  v_2-d )$ which yields $(-d+u_1)+(u_2-d)=(-d+v_1)+(v_2-d)$.
It entails an RDP decomposition in the po-group $G$

$$
\begin{matrix}
-d+u_1 &\vline & c_{11} & c_{12}\\
u_2-d &\vline & c_{21} & c_{22}\\
  \hline     &\vline      &-d+v_1 & v_2-d
\end{matrix}\ \ ,
$$
consequently,
$$
\begin{matrix}
u_1  &\vline & d+c_{11} & c_{12}\\
u_2 &\vline & c_{21} & c_{22}+d\\
  \hline     &\vline      &v_1 & v_2
\end{matrix}\ \ ,
$$
and it gives an RDP decomposition of (iii) in the po-group $A\lex G$

$$
\begin{matrix}
(x, u_1)  &\vline & (x, d+c_{11}) & (0, c_{12})\\
(0, u_2) &\vline & (0, c_{21}) & (0, c_{22}+d)\\
  \hline     &\vline      &(y, v_1) & (0, v_2 )
\end{matrix}\ \ .
$$

(iv) $(x,   u_1 ) + (0,  u_2 )=
(0,  v_1) + (y,  v_2 )$ for $u_1,v_2\in G $, $u_2,v_1 \ge 0$ for  $x=y \in A^+\setminus\{0\}$.

Then $u_1+u_2 = v_1+v_2$  which implies $-v_1+u_1= v_2- u_2$.   If we use the decomposition

$$
\begin{matrix}
(x, u_1)  &\vline & (0, v_1) & (x, -v_1+u_1)\\
(0, u_2) &\vline & (0,0) & (0, u_2)\\
  \hline     &\vline      &(0, v_1) & (y, v_2 )
\end{matrix}\ \ ,
$$
we see that it gives an RDP decomposition for (iv).

(v) $(x,   u_1) + (0,  u_2 )=
(y_1,  v_1 ) + (y_2,  v_2 )$ for $u_1,v_1, v_2\in G $, $u_2\ge 0$, where $y_1,y_2\in A^+\setminus\{0\}$ and $y_1+y_2=x$. Then $u_1+u_2=v_1+v_2$. Hence, the following table gives an RDP decomposition for (v)

$$
\begin{matrix}
(x, u_1)  &\vline & (y_1, v_1) & (y_2, -v_1+u_1)\\
(0, u_2) &\vline & (0,0) & (0, u_2)\\
  \hline     &\vline      &(y_1, v_1) & (y_2, v_2)
\end{matrix}\ \ .
$$

(vi) $(0,   u_1) + (x,  u_2)=
(y_1,  v_1) + (y_2,  v_2 )$ for $u_2,v_1,v_2\in G $, $u_1\ge 0$, where $y_1,y_2\in A^+\setminus\{0\}$ and $y_1+y_2=x$. Then we have $v_1+v_2=u_1+u_2$ and  the following RDP decomposition

$$
\begin{matrix}
(0, u_1)  &\vline & (0, u_1) & (0,0)\\
(x, u_2) &\vline & (y_1, -u_1+v_1) & (y_2, v_2)\\
  \hline     &\vline      &(y_1, v_1) & (y_2, v_2)
\end{matrix}\ \ .
$$

(vii) $(x_1,   u_1 ) + (x_2,  u_2 )=
(y_1,  v_1) + (y_2,  v_2 )$ for $u_1,u_2,v_1,v_2\in G$, where $ x_1,x_2, y_1, y_2 \in A^+\setminus\{0\}$, $x_1+x_2=y=y_1+y_2$ and $x_1> y_1$. Then $u_1+u_2=v_1+v_2$, and since $-y_1+y=y_2$, (vii)  has the following RDP decomposition

$$
\begin{matrix}
(x_1, u_1)  &\vline & (y_1, v_1) & (-y_1+x_1, -v_1+u_1)\\
(x_2, u_2 ) &\vline & (0,0) & (x_2, u_2)\\
  \hline     &\vline      &(y_1, v_1) & (y_2, v_2)
\end{matrix}\ \ \mbox{ if } x_1 >y_1
$$
is an RDP decomposition.

(viii) $(x_1,   u_1) + (x_2,  u_2 )=
(y_1,  v_1 ) + (y_2,  v_2 )$ for $u_1,u_2,v_1,v_2 \in G $,  where $x_1,x_2, y_1, y_2 \in A^+\setminus\{0\}$, $x_1+x_2=y=y_1+y_2$ and $y_1> x_1$. Then
(viii) follows from (vii) when we rewrite (viii) in the equivalent form $(y_1,  v_1 ) + (y_2,  v_2 )=(x_1,u_1) + (x_2,u_2 )$, and an RDP table is as follows

$$
\begin{matrix}
(x_1, u_1)  &\vline & (y_1, v_1) & (-y_1+x_1, -v_1+u_1)\\
(x_2, u_2) &\vline & (0,0) & (x_2, u_2)\\
  \hline     &\vline      &(y_1, v_1) & (y_2, v_2)
\end{matrix}\ \ \mbox{ if } y_1 >x_1.
$$

(ix) $(x_1,   u_1 ) + (x_2,  u_2)=
(y_1,  v_1) + (y_2,  v_2 )$ for $u_1,u_2,v_1,v_2\in G_2 $, where $x_1, x_2, y_1,$ $ y_2 \in A^+\setminus\{0\}$, $x_1+x_2=x=y_1+y_2$ and $x_1= y_1$.  Then $u_1+u_2= v_1+v_2$. The directness of $G$ entails that there is $d\in G$ such that $u_1,u_2,v_1,v_2\ge d$. Hence, $(-d+u_1)+(u_2-d)= (-d+v_1)+(v_2-d)$. The RDP holding in $G$ entails the following RDP table

$$
\begin{matrix}
-d+u_1  &\vline & c_{11} & c_{12}\\
u_2-d &\vline & c_{21} & c_{22}\\
  \hline     &\vline      &-d+v_1 & v_2-d
\end{matrix}\ \ ,
$$
so that

$$
\begin{matrix}
u_1  &\vline & d+c_{11} & c_{12}\\
u_2 &\vline & c_{21} & c_{22}+d\\
  \hline     &\vline      &v_1 & v_2
\end{matrix}\ \ .
$$
It gives an RDP decomposition of (ix)

$$
\begin{matrix}
(x_1, u_1 )  &\vline & (x_1, d+c_{11}) & (0, c_{12} )\\
(x_2,  u_2) &\vline & (0, c_{21}) & (x_2, c_{22}+d)\\
  \hline     &\vline      &(x_1, v_1)& (x_2, v_2 )
\end{matrix}\ \ .
$$

(II) Let $x_1$ and $y_1$ be not comparable. In particular, we have $x_1>0$ and $y_1>0$. Since in $A$ we have the RDP property, from $x_1+x_2=y_1+y_2$ we have an RDP table

$$
\begin{matrix}
x_1 &\vline & e_{11} & e_{12}\\
x_2 &\vline & e_{21} & e_{22}\\
  \hline     &\vline      &y_1 & y_2
\end{matrix}\ \ .
$$

By the assumptions, there is $d \in \mbox{Z}(G)$ such that $d\le u_1,u_2, v_1,v_2$. Hence, for the equality of positive elements $(-d+u_1)+(u_2-d)=(-d+v_1)+(v_2-d)$ in $G$, there is an RDP table

$$
\begin{matrix}
-d+u_1 &\vline & c_{11} & c_{12}\\
u_2-d &\vline & c_{21} & c_{22}\\
  \hline     &\vline      &-d+v_1 & v_2-d
\end{matrix}\ \ ,
$$
which entails

$$
\begin{matrix}
u_1 &\vline & d+c_{11} & c_{12}\\
u_2 &\vline & c_{21} & c_{22}+d\\
  \hline     &\vline      &v_1 & v_2
\end{matrix}\ \ .
$$

If $e_{11}>0$ and $e_{22} >0$, then

$$
\begin{matrix}
(x_1,u_1) &\vline & (e_{11}, d+c_{11}) & (e_{12},c_{12})\\
(x_2,u_2) &\vline & (e_{21},c_{21}) & (e_{22},c_{22}+d)\\
  \hline     &\vline      &(y_1,v_1) & (y_2,v_2)
\end{matrix}\ \
$$
is an RDP table for (4.1).

If $e_{11}=0$, then $e_{12}>0$ and $e_{21}>0$ (otherwise, $x_1$ and $y_1$ are comparable). Then the following table gives an RDP decomposition for (4.1)

$$
\begin{matrix}
(x_1,u_1) &\vline & (e_{11},c_{11}) & (e_{12},d+c_{12})\\
(x_2,u_2) &\vline & (e_{21},c_{21}+d) & (e_{22},c_{22})\\
  \hline     &\vline      &(y_1,v_1) & (y_2,v_2)
\end{matrix}\ \ .
$$

If $e_{22}=0$, then $e_{12}>0$ and $e_{21}>0$, and the last RDP table is an RDP table also for this case.

Summing up all above cases, we see that $A\lex G$ satisfies RDP.
\end{proof}

\begin{rmk}
(i) We note that the com-directness property of $G$ was used only in the last table, and in all the other cases, we have used only the assumption $G$ is directed. Therefore, there is a natural question, does Theorem \ref{th:RDP11} hold assuming $G$ is not necessarily com-directed and rather directed?

(ii) If $G$ is a directed Abelian po-group with RDP, then $G$ is com-directed and $A\lex G$ has RDP for each po-group $A$ with RDP.

(iii) Theorem \ref{th:RDP11} does not hold in the RDP$_1$ variant, in general. Indeed, let $G=\mathbb Z \lex H$, where $H$ is a directed non-Abelian po-group with RDP$_1$. By the note just before Theorem \ref{th:RDP11}, it was shown that $G$ is a com-directed non-Abelian po-group, and applying \cite[Thm 3.3]{262}, we get $G$ has RDP$_1$. Using Lemma \ref{le:RxR}, we see that if $A = \mathbb R \times \mathbb R$ is with strict product ordering, then the Abelian po-group $A$ has both RDP and RDP$_1$, but $A \lex G$ has only RDP and RDP$_1$ fails in it.
\end{rmk}

\begin{rmk}\label{re:RDP12}

A partial answer for the latter note (i) is the assumption that $A$ satisfies the following condition:

Given $a,b \in A^+\setminus\{0\}$ such that $a$ and $b$ are not comparable, there is $d' \in A$, $0<d' \le a,b$ such that $a+d'=d'+a$ and $b+d'=d'+b$.

Hence, if $G$ is directed, and $A$ and $G$ satisfy RDP, then $A\lex G$ has RDP.
\end{rmk}

\begin{proof}
It is enough to verify the last case of (II) in the proof of Theorem \ref{th:RDP11}.

Since $e_{12}>0$ and $e_{21}>0$ are not comparable, there is $0<d'\in A$ such that $d'\le e_{12},e_{21}$ and $d'$ commutes with $e_{21}$ and $e_{12}$.
Then the following table

$$
\begin{matrix}

(x_1,u_1) &\vline & (e_{11}+d', d+c_{11}) & (-d'+e_{12},c_{12})\\
(x_2,u_2) &\vline & (-d'+e_{21},c_{21}) & (d'+e_{22},c_{22}+d)\\
  \hline     &\vline      &(y_1,v_1) & (y_2,v_2)
\end{matrix}\ \
$$
is an RDP table for the last case of (II) in the proof of Theorem \ref{th:RDP11}.
\end{proof}

Remark \ref{re:RDP12} can be generalized as follows.

\begin{rmk}\label{re:RDP13}
Let $A$ be a po-group with {\rm RDP} such that, given two non-comparable elements $a,b \in A^+\setminus\{0\}$, there is an element $d' \in A$ such that $0<d'\le a,b$ and $-a+d'+a=-b+d'+b$. If $G$ satisfies {\rm RDP}, then $A\lex G$ has {\rm RDP}.
\end{rmk}

\begin{proof}
It is enough to verify the last case of (II) in the proof of Theorem \ref{th:RDP11}.

Since $e_{12}>0$ and $e_{21}>0$ are not comparable, there is $0<d'\in A$ such that $d'\le e_{12},e_{21}$ and $-e_{12}+d'+e_{12}= -e_{21}+d'+e_{21}$.
Then the following table

$$
\begin{matrix}

(x_1,u_1) &\vline & (e_{11}+d', d+c_{11}) & (-d'+e_{12},c_{12})\\
(x_2,u_2) &\vline & (-d'+e_{21},c_{21}) & (-e_{12}+d'+e_{12}+e_{22},c_{22}+d)\\
  \hline     &\vline      &(y_1,v_1) & (y_2,v_2)
\end{matrix}\ \
$$
is an RDP table for the last case of (II) in the proof of Theorem \ref{th:RDP11}. Indeed, we have
$-d'+e_{21}+(-e_{12}+d'+e_{12}+e_{22})=-d'+e_{21}+(-e_{21}+d'+ e_{21}+ e_{22})= e_{21}+e_{22}=x_2$.
\end{proof}

\section{Non-Comparability Directness Property and Lexicographic Product}

We continue with study of the Riesz Decomposition Properties of the lexicographic product of po-groups. In particular, we introduce the Non-Comparability Directness Property, and some illustrating examples will be present.

A po-group $A$ satisfying the condition ``given two non-comparable elements $a,b \in A^+\setminus\{0\}$, there is an element $d \in A$ such that $0<d\le a,b$ and $-a+d+a=-b+d+b$" is said to be a po-group with (or satisfying) the {\it Non-Comparability Directness Property}, NCDP for short.

\begin{thm}\label{th:RDP13'}
{\rm (i)}  Let $A$ be a po-group with {\rm RDP} such that, given two non-comparable elements $a,b \in A^+\setminus\{0\}$ with $a+b=b+a$,
there is an element $d' \in A$ such that $0<d'\le a,b$ and $-a+d'+a=-b+d'+b$. If a po-group $G$ satisfies {\rm RDP}, then $A\lex G$ has {\rm RDP}.

{\rm (ii)} Let $A$ be a po-group with {\rm RDP} and $G$ be a directed po-group.
Let $A$ do not satisfy the condition in part {\rm (i)} and let  $A\lex G$ have  {\rm RDP}, then $G$ has {\rm RDP} and,
for each equation $u_1+u_2=v_1+v_2$ of elements of $G$, there exist $d_1,d_2\in G$ such that
\begin{itemize}
\item[{\rm (1)}] $d_1\leq u_1,v_2$ and $d_2\leq u_2,v_1$;
\item[{\rm (2)}]  $d_1+d_2=d_2+d_1$;
\item[{\rm (3)}]  $-u_1+v_1=-d_1+d_2$.
\end{itemize}

{\rm (iii)} Let $G$ have {\rm RDP}. If, for each equation $u_1+u_2=v_1+v_2$ of elements of a po-group $G$, there exist $d_1,d_2\in G$ such that conditions {\rm (1)}--{\rm (3)} of {\rm (ii)} are satisfied, then $G$ is directed and $A \lex G$ has {\rm RDP} for each po-group $A$ with {\rm RDP}.
\end{thm}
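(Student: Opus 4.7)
The plan is to tackle the three parts in the order (iii), (i), (ii): part (iii) carries the main constructive argument, (i) reduces to Remark~\ref{re:RDP13} once a commutation is observed for free, and (ii) is the delicate converse direction.

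\textbf{Part (iii).} Directedness of $G$ follows from the hypothesis applied to the trivial identity $0+u=v+(-v+u)$ for any $u,v\in G$: the resulting $d_2\in G$ satisfies $d_2\le u_2=u$ and $d_2\le v_1=v$ by (1), giving a common lower bound. For RDP of $A\lex G$, fix positive elements with $(x_1,u_1)+(x_2,u_2)=(y_1,v_1)+(y_2,v_2)$. If $x_1,y_1$ are comparable (including equal), Theorem~\ref{th:RDP11}, case~(I), supplies the RDP table, since its subcase analysis uses only RDP and directedness of $G$. If $x_1\|y_1$, any RDP decomposition $(e_{ij})$ of $x_1+x_2=y_1+y_2$ in $A$ must have $e_{12},e_{21}>0$ (from $e_{12}=0$ one would deduce $x_1=e_{11}\le e_{11}+e_{21}=y_1$). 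Applying the hypothesis in $G$ to $u_1+u_2=v_1+v_2$ produces $d_1,d_2\in G$ satisfying (1)--(3), and the table
\[
\begin{matrix}
(x_1,u_1)&\vline&(e_{11},u_1-d_1)&(e_{12},d_1)\\
(x_2,u_2)&\vline&(e_{21},d_2)&(e_{22},-d_2+u_2)\\
\hline&\vline&(y_1,v_1)&(y_2,v_2)
\end{matrix}
\]
is the required decomposition. Row sums are immediate; positivity needs $d_1\le u_1$ and $d_2\le u_2$ from (1), which matter exactly when $e_{11}=0$ and $e_{22}=0$ respectively; column~1 gives $v_1$ via $u_1+(-d_1+d_2)=u_1+(-u_1+v_1)=v_1$ by (3); and column~2 is handled by combining (2) and (3) to obtain $d_2-d_1=-d_1+d_2=-u_1+v_1$, whence $d_1-d_2=-(d_2-d_1)=-v_1+u_1$ and so $d_1+(-d_2+u_2)=-v_1+u_1+u_2=v_2$.

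\textbf{Part (i).} In every RDP table $(e_{ij})$ for $x_1+x_2=y_1+y_2$ in $A$, the identity $e_{11}+e_{12}+e_{21}+e_{22}=x_1+x_2=y_1+y_2=e_{11}+e_{21}+e_{12}+e_{22}$ yields $e_{12}+e_{21}=e_{21}+e_{12}$ by cancellation; if in addition $x_1\|y_1$, then $e_{12}\|e_{21}$ (a comparison would propagate to $x_1,y_1$). The hypothesis on $A$ therefore applies to the commuting non-comparable pair $(e_{12},e_{21})$, producing $d'\in A$, and the table from the proof of Remark~\ref{re:RDP13} completes the RDP decomposition of $A\lex G$. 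When $x_1,y_1$ are comparable, Theorem~\ref{th:RDP11}, case~(I), again suffices.

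\textbf{Part (ii).} Fix a witnessing pair $a,b\in A^+\setminus\{0\}$: non-comparable, commuting, admitting no $d'$ with $0<d'\le a,b$ satisfying $-a+d'+a=-b+d'+b$. RDP of $G$ comes from applying RDP in $A\lex G$ to equations $(0,u_1)+(0,u_2)=(0,v_1)+(0,v_2)$ with $u_i,v_i\ge 0$: positivity forces each $c_{ij}$ to have first coordinate $0$, and the second coordinates realize an RDP in $G$. For (1)--(3) applied to an arbitrary identity $u_1+u_2=v_1+v_2$ in $G$, invoke RDP in $A\lex G$ for the strictly positive equation $(a,u_1)+(b,u_2)=(b,v_1)+(a,v_2)$, legitimate since $a+b=b+a$. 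The first coordinates of the resulting $c_{ij}=(e_{ij},g_{ij})$ form an RDP decomposition of $a+b=b+a$ in $A$; the failure hypothesis restricts its shape sharply, since any positive common lower bound of $a,b$ appearing on the diagonal would have to obstruct the conjugacy identity. One then reads off $(d_1,d_2)$ from suitable second coordinates of the $c_{ij}$: (3) follows from the row/column additivity $g_{11}+g_{12}=u_1$ and $g_{11}+g_{21}=v_1$, (1) from the componentwise inequalities $c_{ij}\le$ row/column sums (which transfer into $G$ precisely where the relevant first coordinate vanishes), and (2) from the commutation $a+b=b+a$ propagated through the rigid structure of $(e_{ij})$. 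The main obstacle of the whole theorem is exactly this extraction step, where the choice of which $g_{ij}$ play the role of $(d_1,d_2)$ and the verification of (2) require delicate bookkeeping; parts (iii) and (i), in contrast, reduce to verifying the table displayed above (or the table in Remark~\ref{re:RDP13}).
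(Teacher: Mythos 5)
Your parts (i) and (iii) are correct and follow the paper's own route: both reduce to the non-comparable case (the last case of (II) in the proof of Theorem \ref{th:RDP11}), derive $e_{12}+e_{21}=e_{21}+e_{12}$ and $e_{12},e_{21}>0$ exactly as the paper does, and exhibit the same RDP tables (your table in (iii), and the table of Remark \ref{re:RDP13} in (i), are literally the ones used in the paper); your directedness argument in (iii) uses the identity $0+u=v+(-v+u)$ instead of the paper's $u+(u+v)=2u+v$, an immaterial difference. (Both you and the paper tacitly use directedness of $G$ in part (i) when invoking case (I) and the common lower bound $d$ of $u_1,u_2,v_1,v_2$; that is a defect of the printed statement of (i), not something you introduced.)

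Part (ii), however, is only an outline at the decisive point. You correctly choose the test equation $(a,u_1)+(b,u_2)=(b,v_1)+(a,v_2)$ for a witnessing non-comparable commuting pair $a,b$, but you never prove that the diagonal entries $e_{11},e_{22}$ of the resulting RDP table in $A$ vanish, and you explicitly defer ``the choice of which $g_{ij}$ play the role of $(d_1,d_2)$ and the verification of (2)'' as delicate bookkeeping --- yet this \emph{is} the content of (ii). The missing computation is short but essential: from $a=e_{11}+e_{12}=e_{12}+e_{22}$ and $b=e_{11}+e_{21}=e_{21}+e_{22}$ one gets $-a+e_{11}+a=e_{22}=-b+e_{11}+b$ together with $0\le e_{11}\le a,b$; so $e_{11}>0$ would make $e_{11}$ a legitimate witness $d'$ for the pair $(a,b)$, contradicting the failure hypothesis, whence $e_{11}=0$ and, symmetrically, $e_{22}=0$. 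Then $c_{11},c_{22}\ge 0$ in $G$ and the choice is forced: $d_1:=c_{12}$, $d_2:=c_{21}$. Condition (1) follows from $u_1=c_{11}+c_{12}$, $v_2=c_{12}+c_{22}$, $v_1=c_{11}+c_{21}$, $u_2=c_{21}+c_{22}$; condition (2) by cancelling $c_{11}$ and $c_{22}$ in $c_{11}+c_{12}+c_{21}+c_{22}=c_{11}+c_{21}+c_{12}+c_{22}$ (so it comes from the $G$-coordinates of $u_1+u_2=v_1+v_2$, not, as you suggest, from propagating $a+b=b+a$ through the $e_{ij}$); condition (3) from $-u_1+v_1=-c_{12}-c_{11}+c_{11}+c_{21}=-c_{12}+c_{21}$. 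Without this extraction your part (ii) is not yet a proof.
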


\begin{proof}
(i)  It is enough to verify the last case of (II) in the proof of Theorem \ref{th:RDP11}.
From $x_1+x_2=y_1+y_2$ it follows that
$e_{11}+e_{12}+e_{21}+e_{22}=e_{11}+e_{21}+e_{12}+e_{22}$ and so
 $e_{12}+e_{21}=e_{21}+e_{12}$.
Since $e_{12}>0$ and $e_{21}>0$ are not comparable, there is $0<d'\in A$ such that $d'\le e_{12},e_{21}$ and $-e_{12}+d'+e_{12}= -e_{21}+d'+e_{21}$.
Then the following RDP table

$$
\begin{matrix}

(x_1,u_1) &\vline & (e_{11}+d', d+c_{11}) & (-d'+e_{12},c_{12})\\
(x_2,u_2) &\vline & (-d'+e_{21},c_{21}) & (-e_{12}+d'+e_{12}+e_{22},c_{22}+d)\\
  \hline     &\vline      &(y_1,v_1) & (y_2,v_2)
\end{matrix}\ \
$$
is for the last case of (II) in the proof of Theorem \ref{th:RDP11}. Indeed, we have
$-d'+e_{21}+(-e_{12}+d'+e_{12}+e_{22})=-d'+e_{21}+(-e_{21}+d'+ e_{21}+ e_{22})= e_{21}+e_{22}=x_2$.

(ii)  By the assumption there are non-comparable $a,b\in A^{+}\setminus \{0\}$ with $a+b=b+a$ such that  there is no $0<d\leq a,b$
satisfying the condition $-a+d-a=-b+d+b$.  First we assume that $A\lex G$ has RDP. Then clearly $G$ has RDP.
Choose arbitrary elements $u_1,u_2,v_1,v_2\in G$ such that $u_1+u_2=v_1+v_2$. Then $(a,u_1),(b,u_2),(b,v_1),(a,v_2)$ are positive elements of
$A\lex G$ and
$(a,u_1)+(b,u_2)=(b,v_1)+(a,v_2)$
and so we have an RDP table in $A\lex G$ as follows

$$
\begin{matrix}
(a,u_1) &\vline & (e_{11},c_{11}) & (e_{12},c_{12})\\
(b,u_2) &\vline & (e_{21},c_{21}) & (e_{22},c_{22})\\
  \hline     &\vline      &(b,v_1) & (a,v_2)
\end{matrix}\ \ .
$$

Clearly, $0\leq e_{ij}$ for all $i,j\in\{1,2\}$. If $e_{11}>0$, then $e_{12}<a$ (since $a=e_{11}+e_{12}$).
Also, $-a+e_{11}+a=-e_{12}-e_{11}+e_{11}+e_{12}+e_{22}=e_{22}$ and
$-b+e_{11}+b=-e_{21}-e_{11}+e_{11}+e_{21}+e_{22}=e_{22}$, so
$-a+e_{11}+a=-b+e_{11}+b$ which is a contradiction. Therefore, $e_{11}=0$. In a similar way, $e_{22}=0$. Thus $c_{11}$ and $c_{22}$ are positive elements of $G$. From $u_1=c_{11}+c_{12}$ and $v_{2}=c_{12}+c_{22}$, we get that $c_{12}\leq u_1,v_2$. Similarly,
$c_{21}\leq u_2,v_1$. Clearly, $c_{12}+c_{21}=c_{21}+c_{12}$. Moreover,
$-u_1+v_1=-c_{12}-c_{11}+c_{11}+c_{21}=-c_{12}+c_{21}$. Set $d_1=e_{12}$ and $d_2=e_{21}$.

(iii) Suppose that $G$ has RDP and, for each equation $u_1+u_2=v_1+v_2$ of elements of $G$, there are $d_1,d_2\in G$ satisfying the above mentioned properties (1)--(3). First we show that $G$ is directed. Indeed, let $u,v\in G$ be given. For the equality $u +(u+v)=2u +v$, there is $d_1\in G$ such that $d_1\le u, v$.

Now let $A$ be a po-group with RDP. We show that $A\lex G$ has RDP.
It is enough to verify the last case of (II) in the proof of Theorem \ref{th:RDP11}.
By the assumption, there are $d_1,d_2\in G$ such that $d_1\leq u_1,v_2$, $d_2\leq u_2,v_1$, $d_1+d_2=d_2+d_1$ and $-u_1+v_1=-d_1+d_2$.

If $e_{11}=0$ or $e_{22}=0$, then $e_{12}>0$ and $e_{21}>0$, we claim that we have an RDP table
$$
\begin{matrix}

(x_1,u_1) &\vline & (e_{11}, u_1-d_1) & (e_{12},d_1)\\
(x_2,u_2) &\vline & (e_{21},d_2) & (e_{22},-d_2+u_2)\\
  \hline     &\vline      &(y_1,v_1) & (y_2,v_2)
\end{matrix}\ \
$$
for $(x_1,u_1)+(x_2,u_2)=(y_1,v_1)+(y_2,v_2)$. Clearly,  $(e_{11}, u_1-d_1), (e_{12},d_1), (e_{21},d_2),
(e_{22},-d_2+u_2)$ are positive elements of $A\lex G$. Also,
$d_1-d_2+u_2=-(-u_1+v_1)+u_2=-v_1+u_1+u_2=v_2$ and
$u_1-d_1+d_2=u_1-u_1+v_1=v_1$, so $A \lex G$ has RDP.
\end{proof}

Inspired by (iii) of the latter theorem, we say that a po-group $G$ has {\it wRDP}, (w stands for strong) if,  for each equation $u_1+u_2=v_1+v_2$ of elements of $G$, there exist $d_1,d_2\in G$ such that conditions {\rm (1)}--{\rm (3)} of Theorem \ref{th:RDP13'}(ii) are satisfied.

Now we present an example of a directed non-Abelian po-group $A$ with NCDP and RDP satisfying the assumptions of Theorem \ref{th:RDP13'}(i).

\begin{exm}\label{ex:RDP14}
Consider the po-group $B:=(\mathbb{R}\times \mathbb{R};+, (0,0))$ with the strict product ordering.
Clearly, $B$ is directed and has RDP. Let $(C;+,0)$ be a
non-Abelian linearly ordered group. By \cite[Thm. 3.1]{Dvu2015}, $A=C\lex B$ has RDP.
We claim that $A$ satisfies the assumptions of Theorem \ref{th:RDP13'}(i).
Put $(a_1,a_2),(b_1,b_2)\in A^{+}\setminus\{0\}$ such that $(a_1,a_2)$ and $(b_1,b_2)$ are non-comparable.
Since $C$ is a chain, then $a_1=b_1$ (otherwise, they are comparable), so $a_2$ and $b_2$ must be non-comparable.
Let $a_2=(x,y)$ and $b_2=(u,v)$.

(i) Let $a_1>0$. Clearly, we can find an element $d\in \mathbb{R}\times\mathbb{R}$ such that $d$ is strictly less than
$a_2$ and $b_2$ and so $(a_1,d)$ is a strictly positive element of $C\lex B$ and $(a_1,d)<(a_1,a_2),(b_1,b_2)$.

(ii) Let $a_1= 0$. Then $a_2$ and $b_2$ are positive elements of $B$. Since $B$ has the strict product ordering, it follows that $x,y,u,v>0$.
We can find $s,t\in \mathbb{R}$ such that $0<s<x,u$ and $0<t<y,v$. Clearly,
$d=(s,t)$ is a strictly positive element of $\mathbb{R}\times \mathbb{R}$ and $d<a_2,b_2$. So,
$(a_1,d)$ is a strictly positive element of $A$ which is strictly less than $(a_1,a_2)$ and $(b_1,b_2)$.

In both cases we have $-(a_1,a_2)+(a_1,d)+(a_1,a_2)=(a_1,d)=-(b_1,b_2)+(a_1,d)+(b_1,b_2)$ which proves the claim.
\end{exm}

The latter example can be strengthened as follows:

\begin{exm}\label{ex:RDP15}
If in Example \ref{ex:RDP14}, we assume that $C$ is a linearly ordered non-trivial group such that $\mbox{Z}(A)=\{0\}$, then $A=C \lex B$ satisfies the conditions of Theorem \ref{th:RDP13'}(i) and is not com-directed.

Indeed, let $(a_1,a_2), (b_1,b_2)$ be two elements of $A$ such that $a_1,b_1<0$. Since $\mbox{Z}(A)=\{0\}\times B$, there is no element $(c_1,c_2)$ from $\mbox{Z}(A)$ such that $(c_1,c_2)\le (a_1,a_2), (b_1,b_2)$.

An example of $C$ satisfying our conditions is the class of square  matrices of the form
$$ A(a,b)=
\left( \begin{array}{cc}
a & b \\
0& 1
\end{array}
\right)
$$
for $a>0$, $b \in (-\infty,\infty)$ with usual multiplication of matrices. It is a non-commutative linearly ordered group with the neutral element $A(1,0)$ and with the positive cone consisting of matrices $A(a,b)$ with $a>1$ or $a=1$ and $b\ge0$. For it we have $\mbox{Z}(C)=\{A(1,0)\}$.
\end{exm}

\begin{exm}\label{ex:GxR}
Let $H$ be a directed po-group with RDP. Then $H\lex \mathbb{R}$ satisfies the conditions in
Theorem \ref{th:RDP13'}(i).
\end{exm}

\begin{proof}
Since $H$ is directed, $A=H\lex \mathbb{R}$ is directed. By  Theorem \ref{th:RDP11}, $H\lex \mathbb{R}$ has RDP.
Let $(a_1,a_2)$ and $(b_1,b_2)$ be strictly positive elements of $H\lex \mathbb{R}$ such that $(a_1,a_2)+(b_1,b_2)=(b_1,b_2)+(a_1,a_2)$.

(1) If $0<a_1,b_1$, then set $d=(0,1)$. Clearly, $d>(0,0)$ and $-(a_1,a_2)+d+(a_1,a_2)=(0,1)=-(b_1,b_2)+d+(b_1,b_2)$.

(2) If $a_1=b_1=0$, then $a_2,b_2>0$. Let $t\in \mathbb{R}$ such that $t<a_2,b_2$. Set $d=(0,t)$. We have
$-(a_1,a_2)+d+(a_1,a_2)=(0,t)=-(b_1,b_2)+d+(b_1,b_2)$.

(3) If $a_1=0$ and $b_1>0$, then $a_2>0$. Set $d=(0,t)$, where $0<t<a_2$. We have $0<d<(a_1,a_2),(b_1,b_2)$ and
$-(a_1,a_2)+d+(a_1,a_2)=(0,t)=-(b_1,b_2)+d+(b_1,b_2)$.

(4) If $a_1>0$ and $b_1=0$, similarly to (3), we have $0<d<(a_1,a_2),(b_1,b_2)$ such that
$-(a_1,a_2)+d+(a_1,a_2)=(0,t)=-(b_1,b_2)+d+(b_1,b_2)$.
\end{proof}

\begin{prop}\label{pr:equal}
A po-group $G$ satisfies has {\rm wRDP} if and only if,
for each equation $u_1+u_2=v_1+v_2$ of elements of $G$, there is a positive element
$k\in G$ such that
\begin{itemize}
\item[{\rm (P1)}] $v_2\leq u_1+k$;
\item[{\rm (P2)}]  $u_2-k$ and $v_2-k$ commute. 
\end{itemize}
\end{prop}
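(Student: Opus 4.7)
The plan is to set up a bijective correspondence between pairs $(d_1,d_2)$ witnessing wRDP for a fixed equation $u_1+u_2=v_1+v_2$ and positive elements $k$ satisfying (P1)--(P2), given by the formulas $d_1=v_2-k$, $d_2=u_2-k$ (equivalently, $k=-d_2+u_2$). A preliminary identity used throughout is $-u_1+v_1=u_2-v_2$, obtained from $u_1+u_2=v_1+v_2$ by adding $-u_1$ on the left and $-v_2$ on the right.

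For the forward direction, suppose $(d_1,d_2)$ witnesses wRDP and put $k:=-d_2+u_2$. Positivity of $k$ is immediate from $d_2\leq u_2$, and a short expansion gives $u_2-k=d_2$. To see that $v_2-k=d_1$, combine (3) with the preliminary identity to rewrite $u_2-v_2=-d_1+d_2$; taking inverses and invoking commutativity (2) yields $v_2-u_2=d_1-d_2$, so
\[
v_2-k = v_2-u_2+d_2 = (d_1-d_2)+d_2 = d_1.
\]
Under this bookkeeping, (P1) becomes $d_1+k\leq u_1+k$, i.e., $d_1\leq u_1$, which is part of (1); and (P2) is exactly the commutativity of $d_1$ and $d_2$, which is (2).

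For the reverse direction, assume $k\geq 0$ satisfies (P1) and (P2), and set $d_1:=v_2-k$, $d_2:=u_2-k$. The inequalities $d_1\leq v_2$ and $d_2\leq u_2$ follow from $k\geq 0$; the inequality $d_1\leq u_1$ is a rearrangement of (P1); and condition (2) is literally (P2). The principal task is to extract (3), i.e., $-u_1+v_1=-d_1+d_2$. Using the preliminary identity and computing $-d_1+d_2=k-v_2+u_2-k$, condition (3) reduces to the single identity
\[
k-v_2+u_2-k+v_2 = u_2. \qquad (\ast)
\]
To establish $(\ast)$, rewrite (P2) as $u_2-k+v_2 = v_2-k+u_2$ (cancel the trailing $-k$); subtracting $v_2$ on the left gives $-v_2+u_2-k+v_2 = -k+u_2$; and adding $k$ on the left delivers $(\ast)$. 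Once (3) is in hand, the bound $d_2\leq v_1$ follows at once: from (3) we get $d_2=d_1-u_1+v_1$, and $d_1\leq u_1$ forces $d_2\leq v_1$.

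The main obstacle is the extraction of (3) from (P2) in the non-Abelian setting: in an Abelian group $(\ast)$ is automatic and (P2) would be redundant, so the content of (P2) is precisely the amount of non-Abelian commutation required for (3) to hold. All other verifications are standard po-group manipulations.
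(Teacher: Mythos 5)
Your proof is correct and follows essentially the same route as the paper's: both directions rest on the correspondence $d_1=v_2-k$, $d_2=u_2-k$, and your witness $k=-d_2+u_2$ coincides, under conditions (1)--(3), with the paper's choice $k=-d_1+v_2$. The only cosmetic difference is that you isolate the identity $(\ast)$ to extract condition (3) from (P2), whereas the paper carries out the same commutation inline.
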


\begin{proof}
Let $u_1,u_2,v_1,v_2\in G$ such that $u_1+u_2=v_1+v_2$. Suppose (P1) and (P2) hold.  We set
$d_1=v_2-k$ and $d_2=u_2-k$.
Clearly, $d_1\leq v_2$ and $d_2\leq u_2$.
From (P1), it follows that $d_1=v_2-k\leq u_1$, and so $d_1\leq u_1$.
Also, $u_1-(v_2-k)+(u_2-k)=u_1+(u_2-k)-(v_2-k)=(u_1+u_2)-k-(v_2-k)=(v_1+v_2)-k-(v_2-k) =v_1$ and so
$u_1-d_1=v_1-d_2$ and $-u_1+v_1=-d_1+d_2$. Since $0\leq u_1-d_1$, then $0\leq v_1-d_2$. That is, $d_2\leq v_1$.
Therefore, $G$ satisfies  RDP.

Conversely, let $G$ satisfy have RDP.
Set $k:=-d_1+v_2$. Then by (1) of Theorem \ref{th:RDP13'}(ii),
$0\leq k$ and $v_2-k=d_1$. By (2) and (3), $u_2-k=u_2-v_2+d_1=-u_1+v_1+d_1=d_2$ and so by (2),
$u_2-k$ and $v_2-k$ commute. Therefore, (P1) and (P2) hold.
\end{proof}

\begin{prop}\label{pr:RDP16}
Let $G$ be a po-group with {\rm RDP} such that, for each
equation $u_1+u_2=v_1+v_2$ of elements of $G$, there is an element $k\in G^+$ satisfying {\rm (P1)--(P2)} of Proposition {\rm \ref{pr:equal}}, then
we always can find a table
$$
\begin{matrix}

u_1 &\vline & c_{11} & c_{12}\\
u_2 &\vline & c_{21} & c_{22}\\
  \hline     &\vline      &v_1& v_2
\end{matrix}\ \
$$
such that $c_{11},c_{22}\in G^{+}$.
\end{prop}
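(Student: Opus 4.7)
The plan is to extract the required decomposition directly from the constructions already built in the previous section. By the hypothesis, for the equation $u_1+u_2=v_1+v_2$ there is an element $k\in G^+$ with (P1) $v_2\le u_1+k$ and (P2) the elements $u_2-k$ and $v_2-k$ commuting. Following the bridge established in Proposition \ref{pr:equal}, I would set
\[
d_1:=v_2-k,\qquad d_2:=u_2-k,
\]
whereupon $k\in G^+$ together with (P1) forces $d_1\le u_1,v_2$ and $d_2\le u_2$, while (P2) together with the equation $u_1+u_2=v_1+v_2$ yield the commutativity $d_1+d_2=d_2+d_1$ and the identity $-u_1+v_1=-d_1+d_2$ (exactly the verifications already carried out inside the proof of Proposition \ref{pr:equal}).

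Next, mirroring the explicit table used at the end of the proof of Theorem \ref{th:RDP13'}(iii), I would define
\[
c_{11}:=u_1-d_1,\qquad c_{12}:=d_1,\qquad c_{21}:=d_2,\qquad c_{22}:=-d_2+u_2.
\]
The row identities $c_{11}+c_{12}=u_1$ and $c_{21}+c_{22}=u_2$ are trivial. For the column identity $c_{11}+c_{21}=v_1$, the relation $-u_1+v_1=-d_1+d_2$ gives $u_1+(-d_1+d_2)=u_1+(-u_1+v_1)=v_1$ directly. For $c_{12}+c_{22}=v_2$, commutativity (P2) permits the rewrite $d_1-d_2=-d_2+d_1$, and then inverting the above identity to $-d_2+d_1=-v_1+u_1$ produces $d_1-d_2+u_2=-v_1+u_1+u_2=v_2$.

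Finally, the positivity assertions are built into the construction: $c_{11}=u_1-d_1\in G^+$ since $d_1\le u_1$, and $c_{22}=-d_2+u_2\in G^+$ since $d_2\le u_2$, which are precisely the two positivity claims required. The only mildly delicate point of the whole argument is the non-abelian swap $d_1-d_2=-d_2+d_1$ in the fourth column-sum identity, and hypothesis (P2) is exactly what makes this step legitimate; no further appeal to the RDP assumption on $G$ is needed in the construction itself.
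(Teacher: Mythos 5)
Your proof is correct, but it takes a genuinely different and more direct route than the paper. The paper's own argument is indirect: it invokes Proposition \ref{pr:equal} and Theorem \ref{th:RDP13'}(iii) to conclude that $(\mathbb R\times\mathbb R)\lex G$ has RDP, applies this to the equation $((1,0),u_1)+((0,1),u_2)=((0,1),v_1)+((1,0),v_2)$, and then uses the disjointness $(1,0)\wedge(0,1)=(0,0)$ in $\mathbb R\times\mathbb R$ to force the first components of the diagonal entries to vanish, whence $c_{11},c_{22}\ge 0$ by the definition of the lexicographic order. You instead extract the $G$-components of the table appearing at the end of the proof of Theorem \ref{th:RDP13'}(iii) and verify the four row/column identities by hand from $d_1=v_2-k$, $d_2=u_2-k$; all your computations, including the delicate non-abelian rewrite $d_1-d_2=-d_2+d_1$ licensed by (P2) and the identity $-u_1+v_1=-d_1+d_2$, check out. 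Your approach buys two things: first, it shows that the RDP hypothesis on $G$ is actually superfluous for this conclusion, since only the existence of $k$ with (P1)--(P2) is used; second, your table has exactly the orientation stated in the proposition ($u_1=c_{11}+c_{12}$, $v_1=c_{11}+c_{21}$, etc.), whereas the paper's table has the roles of the $u_i$ and $v_j$ as rows and columns interchanged, which in the non-commutative setting is not literally the stated table without a further (routine but unstated) adjustment. The paper's route has the mild advantage of recycling machinery already proved rather than redoing the verification, but your self-contained argument is cleaner and slightly stronger.
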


\begin{proof}
Let a po-group $G$ with RDP satisfying (P1)--(P2) be given. According to Proposition \ref{pr:equal} and Theorem \ref{th:RDP13'}(iii), $A\lex G$ has RDP for each po-group $A$ with RDP, in particular, for the $\ell$-group $A=\mathbb R \times \mathbb R$.
Take positive elements $((1,0),u_1),((0,1),u_2),((0,1),v_1),((1,0),v_2)$ from $(\mathbb R \times \mathbb R)\lex G$ such that $((1,0),u_1)+((0,1),u_2)=((0,1),v_1)+((1,0),v_2)$. We can find an RDP table
as follows
$$
\begin{matrix}

((0,1),v_1) &\vline & (e_{11}, c_{11}) & (e_{12},c_{12})\\
((1,0),v_2) &\vline & (e_{21},c_{21}) & (e_{22},c_{22})\\
  \hline     &\vline      &((1,0),u_1) & ((0,1),u_2)
\end{matrix}\ \ .
$$
Since $(0,1)\wedge (1,0)=(0,0)$ in $\mathbb R\times \mathbb R$, we have $e_{11}=e_{22}=(0,0)$. Therefore, $c_{11}\ge 0$ and $c_{22}\ge 0$.
\end{proof}

\begin{rmk}\label{re:RDP17}
(1) We note that there is an Abelian po-group with RDP that does not satisfy the conditions (P1)--(P2) of Proposition \ref{pr:equal}. Let $G=\{(x,y)\in \mathbb R\times \mathbb R\mid x+y=0\}$ be a po-subgroup of $\mathbb R\times \mathbb R$ ordered with respect to the original order in $\mathbb R\times \mathbb R$. Then $G^+=\{(0,0)\}$, so $G$ is an Abelian po-group satisfying RDP. But $G$ is not directed, so by (iii) of Theorem \ref{th:RDP13'}(iii) and Proposition \ref{pr:equal}, it does not satisfies (P1)--(P2).

(2) Similarly, every non-directed po-group does not satisfy (P1)--(P2).
\end{rmk}

(3) Every directed Abelian po-group, $G$, satisfies (P1)--(P2). Indeed, let $u_1+u_2=v_1+v_2$ hold. Due to directness of $G$, there is an element $k \in G$ such that $k\ge 0, v_2-u_1$. Then $v_2\le u_1+k$ and of course, $u_2-k$ and $v_2-k$ commute.

\vspace{2mm}
\noindent
{\bf Question.} Does there exist a directed po-group with RDP and not with wRDP, or does every directed po-group with RDP satisfies wRDP?

\vspace{2mm}
A partial answer to this question is the following example.

\begin{exm}\label{ex:5.8}
There is a directed po-group with RDP$_0$ and not satisfying RDP which does not satisfy wRDP.
\end{exm}

\begin{proof}
We use an example from Remark \ref{rm:RDP}. Thus let $G$ be an additive group generated freely by the countably many elements $g_0, g_1, \ldots$, let $v: (G;+,0)\to (\mathbb R;+,0)$ be the homomorphism determined by the conditions $v(g_{2i} )=v(g_{2i+1}) = (1/2)^i$, $i =0,1,\ldots$. Define a partial order in $G$ by setting
$G^+:= \{x \in  G\mid x = 0 \mbox{ or } v(x) > 0\}$. This means that we have for $a, b \in G$ $a\le b$ iff $a=b$ or $v(a)<v(b)$. In \cite[Ex 3.6]{DvVe1}, there was proved that $G$ satisfies RDP$_0$ but RDP fails.

We assert that for this $G$, conditions (P1)--(P2) fail.
Take elements $g_1,g_2,g_3$ and we put $u_1 = g_3-g_1$, $u_2= g_1$, $v_1=g_3-g_2$, and $v_2=g_2$. Then $u_1+u_2=v_1+v_2$. We show that there is no $k\ge 0$ such that $v_2\leq u_1+k$, and $u_2-k$ and $v_2-k$ commute. From  construction of $G$, we  see that $k$ has to be strictly positive. Then we have to verify
$$
g_2\le (g_3-g_1)+k \quad {\rm and} \quad g_1-k+g_2-k= g_2-k+g_1-k. \eqno(5.1)
$$

Assume that conditions (P1)--(P2) hold for this case. Then  $u_2-k$ and $v_2-k$ commute, i.e. $g_2-k+g_1=g_1-k+g_2$.

To prove a contradiction, we use the word techniques.
The free group with generators $g_0,g_1, \ldots$ can be identified with the set of reduced words $n_1g_{n_1}+\cdots + n_lg_{n_l}$ for $n_1,\ldots,n_l \in \{-1,1\}$ over the alphabet $g_0, -g_0, g_1, -g_1,\ldots$, where reduced means that there are no successive letters $g_i, -g_i$ or $-g_i, g_i$ in the word.

Take two words $g_2-k+g_1$ and $g_1-k+g_2$ which  are identifiable with the same element and
thus, in particular, of the same length. Hence either both are reduced or
both are not reduced. Let $k=k_1+\cdots + k_m$ be the reduced word. Comparing the same words $g_2 - k_m-\cdots-k_1 +g_1$ and $g_1 - k_m-\cdots-k_1 +g_2$, we see that both words are not reduced, hence,  $k=n(g_1-g_2)+g_1$ for $n\ge 0$, or $k = n(g_2-g_1)+g_2$ for $n \ge 0$. Since $v(g_3)= 1/4$ and $v(g_1)=v(g_2)=1/2$, for $k=n(g_1-g_2)+g_1$, we have $(g_3 -g_1)+k = (g_3-g_1)+g_1+n(-g_2-g_1)= g_3 + n(-g_2+g_1) <g_2$ while  $1/3 = v(g_3+n(-g_2+g_1)) < 1/2=v(g_2)$ which contradicts (5.1). Similarly, if $k =n(g_2-g_1)+g_2$, we have $v(g_3-g_1+n(g_2-g_1)+g_2)=v(g_3)<v(g_2)$, i.e. $g_3<g_2$ which also contradicts (5.1).

Hence, (P1)--(P2) fail in $G$.
\end{proof}

The latter example can be generalized as follows.

\begin{exm}\label{ex:5.9}
Let $G$ be an additive group generated freely by the countably many elements $g_0, g_1, \ldots$, let $v: (G;+,0)\to (\mathbb R;+,0)$ be the homomorphism determined by the conditions $v(g_i )>0$, $i =0,1,\ldots$ and $\liminf_n v(g_n)=0$. Define a partial order in $G$ by setting
$G^+:= \{x \in  G\mid x = 0 \mbox{ or } v(x) > 0\}$. Then $G$ is a directed po-group with RDP$_0$ but RDP and sRDP fail in $G$.

\end{exm}

\begin{proof}
The range of $v$, $v(G)$ is a subgroup of $\mathbb R$.
Since $\liminf_n v(g_n)=0$, we have that $v(G)$ is dense in $\mathbb R$, see \cite[Lem 4.21]{Goo}. By \cite[Ex. 10]{Fuc1}, $G$ has RIP, which by Lemma \ref{le:RIP} gets $G$ has RDP$_0$.

To exhibit RDP, take $a_1,a_2,b_1,b_2 \in G^+$ such that $a_1+a_2=b_1+b_2$. Without loss of generality we can assume $a_1,a_2,b_1,b_2 >0$.  For it we search an RDP table in the form

$$
\begin{matrix}
a_1  &\vline & a_1-k & k\\
a_{2} &\vline & k-a_1+b_1  & -k+b_2\\
  \hline     &\vline      &b_{1} & b_{2}
\end{matrix}\ \ ,
$$
where $k\ge 0$.
Hence, $k-a_1+b_1=a_2-b_2+k$. If $v(a_1) <v(b_1)$, we put $k=0$. If $v(a_1)>v(b_1)$, we put $k = -b_1+a_1$. If $v(a_1)=v(b_1)$, the case $a_1=b_1$ is trivial, so let $a_1\ne b_1$. We claim that for such a case, there is no $k\ge 0$. Indeed, let $a$ be any element of $G$ such that $v(a)=0$. Choose $b_1=a_1+a$, $b_2=-a+a_2$. Then $k$ has to commute with $a$, in particular, $k$ has to commute with $a=g_1+g_2-g_1-g_2$. Using word technique, we show that then $k=n(g_1+g_2-g_1-g_2)$ for some $n\in \mathbb Z$. Indeed, let $k=k_1+\cdots + k_m$  be a reduced word and take two same words $k_1+\cdots + k_m+ g_1+g_2-g_1-g_2$ and $g_1+g_2-g_1-g_2+k_1+\cdots + k_m$. They are simultaneously reduced or non-reduced.

Let the words be reduced. It is possible to show that $m= 4j$. Comparing letters, we have $k_1=g_1=k_{m-3}$, $k_2=g_2=k_{m-2}$, $k_3=-g_1=k_{m-1}$, $k_4=-g_2=k_m$. Hence
$k_5+\cdots + k_{m-4}+ g_1+g_2-g_1-g_2= g_1+g_2-g_1-g_2 + k_5+\cdots + k_{m-4}$, which gives after finitely many cases $k = n(g_1+g_2-g_1-g_2)$ for some $n\ge 1$.

Let the words be not reduced, then $k_m=-g_1$ and $k_1= g_2$, and similarly we can show that then $k=n(g_2+g_1-g_2-g_1)$ for some $n \ge 0$.

Since $v(k)=nv(g_1+g_2-g_1-g_2)=0$, then $k \ge 0$ only if $k=0$ which implies that $v(k-a_1+b_1)=0$ but $a_1+b_1$ is not positive. Hence, RDP fails in $G$.

To prove that (P1)--(P2) fail, let $u_1+u_2=v_1+v_2$ be given, where $u_2=g_1+g_2$, $v_2=g_2+g_1$, and $u_1<0$. Then also $v_1<0$. Using word technique, to find a solution for $u_2-k+v_2=v_2-k+u_2$,  we exhibit words $-g_1- g_2+k_1+\cdots +k_m- g_2-g_1= -g_2- g_1+k_1+\cdots +k_m- g_1-g_2$. The equation has a solution only if $m=4i+2$, and then $k=n(g_1+g_2-g_1-g_2)+g_1+g_2$ or $k=n(g_2+g_1-g_2-g_1)+g_2+g_1$ for $n\ge 0$. Check $v(u_1+k)=v(u_1)+v(n(g_1+g_2-g_1-g_2)) + v(g_1+g_2)< v(g_1+g_2)=v(v_2)$ which entails $u_1+k<v_2$. The same is true for the second solution of $k$. Hence, wRDP fails in $G$.
\end{proof}

If in Example \ref{ex:5.9}, we assume that $G$ is an Abelian group freely generated by $g_1,g_2,\ldots$ and the order is the same as in Example \ref{ex:5.9}, then $G$ is a directed po-group with RDP and with wRDP.

\section{Conclusion}

Let $\mathcal{LRDP}$ be the class of po-groups $A$ with RDP such that $A\lex G$ has RDP for each directed po-group $G$ with RDP. We have shown that the class $\mathcal{LRDP}$ contains all
\begin{itemize}
\item[{\rm (i)}] linearly ordered groups, \cite[Thm 3.1]{262}, Theorem \ref{th:RDP3};
\item[{\rm (ii)}] antilattice po-groups with RDP, Theorem \ref{th:3.3};
\item[{\rm (iii)}] direct products of linearly ordered Abelian groups satisfying the condition of Theorem \ref{th:RDP3} and with the strict product ordering, Theorem \ref{th:RDP3};

\item[{\rm (iv)}] po-groups with RDP satisfying NCDP Theorem \ref{th:RDP13'}(i).

\end{itemize}

In the paper we have obtained other interesting conditions when the lexicographic product of two po-groups has RDP.

It is still an open problem whether e.g. every po-group with RDP belongs $\mathcal{LRDP}$ or a weaker problem whether every Abelian po-group with RDP belongs to $\mathcal{LRDP}$.

The study of the lexicographic product of po-groups is important for the study of so-called lexicographic pseudo effect algebras, i.e. when we can represent a pseudo effect algebra as an interval in a unital po-group $(H\lex G,(u,0))$, where $(H,u)$ is a unital po-group with RDP and $G$ is a directed po-group with RDP. The authors hope to continue in these applications of the lexicographic product of po-groups for pseudo effect algebras.


\end{document}